\newtheorem{theorem}{Theorem}[section]
\newtheorem{lemma}[theorem]{Lemma}
\newtheorem{proposition}[theorem]{Proposition}
\theoremstyle{definition}
\newtheorem{remark}[theorem]{Remark}
\numberwithin{equation}{section}
\DeclareMathOperator*{\Ran}{Ran}
\DeclareMathOperator*{\Ker}{Ker}
\DeclareMathOperator*{\Fix}{Fix}
\DeclareMathOperator*{\dist}{dist}
\DeclareMathOperator*{\R}{Re}
\newcommand{\B}{\mathcal{B}}
\newcommand{\T}{\mathbb{T}}
\newcommand{\CC}{\mathbb{C}}
\newcommand{\DD}{\mathbb{D}}
\newcommand{\ep}{\varepsilon}
\title[Quantified asymptotic behaviour of Banach space operators]{Quantified asymptotic behaviour of Banach space operators and applications to iterative projection methods}
\author[C. Badea]{Catalin Badea}
\address[C. Badea]{Universit\'e de Lille, CNRS UMR 8524, Laboratoire Paul Painlev\'e, 59655 Villeneuve d'Ascq, France}
\email{\tt catalin.badea@univ-lille.fr}
\author[D. Seifert]{David Seifert}
\address[D. Seifert]{St John's College, St Giles, Oxford\;\;OX1 3JP, United Kingdom}
\email{\tt david.seifert@sjc.ox.ac.uk}
\keywords{Rates of convergence, orbits, operators, Banach spaces, reflexivity, uniform convexity, uniform smoothness, numerical range, iterative projection methods.}
\subjclass[2010]{41A65,  47A12  (47J25, 47A05, 47A10)}
\thanks{Acknowledgements. This work was supported in part by the Labex CEMPI (ANR-11-LABX-0007-01).}
\begin{document}

\begin{abstract}
We present an extension of our earlier work [Ritt operators and convergence in the method of alternating
  projections, {\em J. Approx. Theory}, 205:133--148, 2016] by proving a general asymptotic result for orbits of an operator acting on a reflexive  Banach space. This result is obtained under a condition involving the growth of the resolvent, and we also discuss conditions involving the location and the geometry of the numerical range of the operator. We then apply the general results to some classes of iterative projection methods in approximation theory, such as the Douglas-Rachford splitting method and, under suitable  geometric conditions either on the ambient Banach space or on the projection operators,  the method of alternating projections. 
\end{abstract}

\maketitle


\section{Introduction}\label{sec:intro} 

Many problems in approximation theory can be formulated in terms of operators acting on suitable Banach or Hilbert spaces. Two particularly simple yet powerful methods in approximation theory are the Douglas-Rachford splitting method and the method of alternating projections (for two or more subspaces). These two iterative projection methods play important roles in convex optimisation, differential equations and signal processing; see for instance the references in \cite{BDNPW16}. In both cases one is led to consider a bounded linear operator $T$ acting on a Hilbert space $X$, and one is interested in the asymptotic behaviour of the orbits $(T^nx)_{n\ge0}$ as $n\to\infty$ for different initial vectors $x\in X$. It is now well known (see for instance the survey \cite{DeHuSurvey1}) that in situations such as these one expects a dichotomy for the rate of convergence: either the sequence $(T^nx)_{n\ge0}$ converges exponentially fast  for all $x\in X$ or it converges arbitrarily slowly for suitably chosen initial vectors $x\in X$. It was recently shown by the authors in \cite{BaSe16} that one can say more in the case of the method of alternating projections, namely that even when the convergence is arbitrarily slow there exists a rich supply of initial vectors $x\in X$ for which the sequence $(T^nx)_{n\ge0}$ converges to a limit at a rate faster than any polynomial rate. The purpose of this paper is to present a general result in the Banach space situation which extends the approach in \cite{BaSe16} so as to be applicable to a wider class of iterative schemes, including the Douglas-Rachford splitting method, and to more general ambient Banach spaces.

The paper is organised as follows. In Section~\ref{sec:gen} we present a general result, Theorem~\ref{thm:gen},  describing the rate of convergence of orbits of certain operators acting on a Banach space under an assumption involving the growth of the resolvent. Then in Section~\ref{sec:numran} we obtain conditions on the location and the geometry of the numerical range of the operator which ensure that the required resolvent growth condition in Theorem~\ref{thm:gen} is satisfied. 
Finally, in Sections~\ref{sec:appl1} and \ref{sec:appl2} we show how the general theory can be applied to particular methods in approximation theory, namely the Douglas-Rachford splitting method and the method of alternating projections. For the method of alternating projections we discuss, in particular, products of orthoprojections in uniformly convex and uniformly smooth Banach spaces, while for the Douglas-Rachford splitting method we discuss a variant of the original result for several subspaces of a given reflexive Banach space. 

The notation we use is standard. Given a Banach space $X$,  assumed to be complex throughout, we write $\B(X)$ for the algebra of bounded linear operators $T\colon X\to X$. The identity operator on $X$ is denoted by $I_X$, or simply by $I$ if the space $X$ is clear from the context. Given an operator $T\in \B(X)$ we write $\Ker T$ for the kernel of $T$ and $\Ran T$ for the range of $T$, and we let $\Fix T=\Ker (I-T)$. An operator $T$ is said to be power-bounded if $\sup_{n\ge1}\|T^n\| < \infty$. Moreover, given $T\in\B(X)$ we let $\sigma(T)$ denote the spectrum of $T$, $r(T)$ its spectral radius and we let $R(\lambda,T)$ be the resolvent operator $(\lambda I-T)^{-1}$ when $\lambda\in\CC\setminus\sigma(T)$. The dual of $X$ is denoted by $X^{\ast}$ and we write  $\langle x,\phi\rangle =\phi(x)$ for $\phi\in X^{\ast}$ and $x\in X$. If $X$ is a Hilbert space and $T\in\B(X)$ we write $W(T)=\{(Tx,x):x\in X,\|x\|=1\}$ for the numerical range (field of values) of $T$. An extension of the numerical range to the Banach space situation will be introduced in Section~\ref{sec:numran}. Furthermore, we let $\DD=\{\lambda\in\CC:|\lambda|<1\}$ and  $\T=\{\lambda\in\CC:|\lambda|=1\}$, and we use `big O' and `little o' notation in the usual way.  Other notation and definitions will be introduced when needed. 

\section{A general dichotomy result for the rate of convergence}\label{sec:gen}

Let $X$ be a Banach space and suppose that $T_n\in\B(X)$, $n\ge0$, are operators such that $\|T_nx\|\to0$ as $n\to\infty$ for all $x\in X$. We say that the convergence is \emph{arbitrarily slow} if for every sequence $(r_n)_{n\ge0}$ of non-negative scalars satisfying $r_n\to0$ as $n\to\infty$ there exists $x\in X$ such that $\|T_nx\|\ge r_n$ for all $n\ge0$. We say that the convergence is \emph{weakly arbitrarily slow} if for every sequence $(r_n)_{n\ge0}$ of non-negative scalars satisfying $r_n\to0$ as $n\to\infty$ there exist $x\in X$ and $\phi\in X^{\ast}$ such that $\R \langle T_nx,\phi\rangle \ge r_n$ for all $n\ge0$.  
We say that the convergence is \emph{superpolynomially fast} for a particular $x\in X$ if $\|T_n x\|=o(n^{-k})$ as $n\to\infty$ for all $k\ge1$. \medskip

\begin{theorem}\label{thm:gen}
Let $X$ be a reflexive Banach space and suppose that $T\in\B(X)$ is a power-bounded operator such that $\sigma(T)\cap\T\subseteq\{1\}$ and, for some $\alpha\ge1$, 
\begin{equation}\label{eq:res}
\|R(e^{i\theta},T)\|=O(|\theta|^{-\alpha}),\quad \theta\to0.
\end{equation}
Then $X=\Fix T\oplus Z$, where $Z$ denotes the closure of $\Ran(I-T)$, and for all $x\in X$ we have
\begin{equation}\label{eq:conv}
\|T^nx-P_Tx\|\to0,\quad n\to\infty,
\end{equation}
where $P_T$ denotes the projection onto $\Fix T$ along $Z$. 
Moreover, there is a dichotomy for the rate of convergence. Indeed, if $\Ran(I-T)$ is closed then there exist $C>0$ and $r\in[0,1)$ such that
\begin{equation}\label{eq:exp}
\|T^n-P_T\|\le C r^n, \quad n\ge0,
\end{equation}
whereas if $\Ran(I-T)$ is not closed then the convergence in \eqref{eq:conv} is arbitrarily slow and weakly arbitrarily slow. In either case there exists a dense subspace $X_0$ of $X$ such that for all $x\in X_0$ the convergence in \eqref{eq:conv} is superpolynomially fast.
\end{theorem}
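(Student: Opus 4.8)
The plan is to prove the three assertions---strong convergence together with the ergodic decomposition, the exponential/arbitrarily slow dichotomy, and superpolynomial decay on a dense subspace---by separate arguments, with only the last genuinely requiring the resolvent hypothesis \eqref{eq:res}.

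First I would obtain the decomposition and \eqref{eq:conv}. Since $X$ is reflexive and $T$ is power-bounded, the mean ergodic theorem gives the splitting $X=\Fix T\oplus Z$ with $Z=\overline{\Ran(I-T)}$ and realises $P_T$ as the strong limit of the Ces\`aro averages $\frac1n\sum_{k=0}^{n-1}T^k$. On $\Fix T$ the convergence \eqref{eq:conv} is trivial, and on $Z$ one has $P_Tx=0$, so it remains to show $T^nz\to0$ for $z\in Z$. Because $T$ is power-bounded with $\sigma(T)\cap\T\subseteq\{1\}$, the Katznelson--Tzafriri theorem yields $\|T^n(I-T)\|\to0$; hence $T^ny\to0$ for $y\in\Ran(I-T)$, and this extends to all of $Z$ by density and the uniform bound $\sup_n\|T^n\|<\infty$.

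For the dichotomy, write $S_n=T^n-P_T$, so that $S_n$ vanishes on $\Fix T$ and acts as $T^n|_Z$ on $Z$; the whole question concerns $T|_Z$. If $\Ran(I-T)$ is closed then $(I-T)|_Z$ is a bijection of $Z$---injective since $\Fix T\cap Z=\{0\}$ and surjective since its range is the closed space $Z$---hence boundedly invertible, so $1\notin\sigma(T|_Z)$; together with $\sigma(T|_Z)\cap\T\subseteq\{1\}$ this forces $r(T|_Z)<1$ and \eqref{eq:exp} follows from $\|S_n\|=\|T^n|_Z\|$. The same computation shows conversely that closedness of $\Ran(I-T)$ is equivalent to $\|T^n|_Z\|\to0$, so when $\Ran(I-T)$ is not closed we have $\|S_n\|\not\to0$ while $S_n\to0$ strongly. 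In this regime arbitrarily slow and weakly arbitrarily slow convergence follow from the gliding-hump/Baire category technique of \cite{BaSe16}: the failure of uniform convergence lets one, for any prescribed $r_n\to0$, build a vector $x$ (and, using reflexivity and norming functionals, a functional $\phi$) with $\|S_nx\|\ge r_n$ (respectively $\R\langle S_nx,\phi\rangle\ge r_n$) for all $n$.

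The superpolynomial statement is where \eqref{eq:res} enters decisively, and I expect it to be the main obstacle. Setting $A=I-T$, the bound $\|R(e^{i\theta},T)\|=O(|\theta|^{-\alpha})$ feeds into a quantified Tauberian estimate to give $\|T^nA^m\|=O(n^{-\beta_m})$ with $\beta_m\to\infty$; a fixed vector in $\Ran(A^m)$ thus decays at the polynomial rate $n^{-\beta_m}$, but upgrading this to superpolynomial decay for individual vectors requires them to be infinitely smooth with respect to $A$. The resolvent condition \eqref{eq:res} is exactly a Ritt-type condition, and I would use it to set up a bounded functional calculus for functions $g$ holomorphic on $\DD$ that are flat to infinite order at the boundary point $1$; a Cauchy-type contour representation hugging $\T$ near $1$ then shows, the flatness of $g$ overpowering the $O(|\theta|^{-\alpha})$ growth of the resolvent, that $\|T^ng(T)z\|=o(n^{-k})$ for every $k$ and every $z\in X$. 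Taking $X_0=\Fix T+\{g(T)z:z\in X\}$ as $g$ ranges over such flat functions yields a subspace on which \eqref{eq:conv} is superpolynomially fast, and letting $g\to1$ boundedly gives $g(T)z\to z$ and hence density. The delicate points, and the crux of the whole theorem, are the construction of this boundary functional calculus from \eqref{eq:res} and the contour estimate balancing the order of flatness of $g$ against the exponent $\alpha$.
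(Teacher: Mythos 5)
Your treatment of the ergodic decomposition, of \eqref{eq:conv} via the Katznelson--Tzafriri theorem, and of the exponential half of the dichotomy coincides with the paper's argument. One caveat on the slow-convergence half: the mere failure of uniform convergence does not, by itself, produce arbitrarily slow convergence in the strong sense required here (a single $x$ with $\|T^nx-P_Tx\|\ge r_n$ for \emph{all} $n$). What the paper actually invokes is M\"uller's local spectral radius theorem \cite{Mue88}, which applies precisely because $r(T|_Z)=1$ when $\Ran(I-T)$ is not closed, and, for the weak version, \cite{Mue05}, which requires that $X$ contain no isomorphic copy of $c_0$ (guaranteed by reflexivity). Your sketch points at the right circle of ideas but attributes the construction to the wrong mechanism.

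The genuine gap is in the superpolynomial part, and you flag it yourself. The functional-calculus route is left incomplete at exactly the points that matter: the calculus for functions flat at $1$ is not constructed (note that for $\alpha>1$ condition \eqref{eq:res} is strictly weaker than the Ritt condition, so it is not ``exactly a Ritt-type condition'' and there is no ready-made bounded calculus to lean on); your candidate $X_0=\Fix T+\{g(T)z\}$, with $g$ ranging over a family of flat functions, is not visibly a linear subspace, since a sum $g_1(T)z_1+g_2(T)z_2$ need not lie in the range of any single $g(T)$; and the density claim $g(T)z\to z$ is asserted rather than proved. The paper sidesteps all of this with a much shorter argument: the quantified Katznelson--Tzafriri theorem (\cite{Sei15b} or \cite{Sei16}) converts \eqref{eq:res} directly into $\|T^nx\|=O((\log n)^{1/\alpha}n^{-1/\alpha})$ for $x\in\Ran(I-T)$; iterating this estimate gives the rate $(\log n)^{k/\alpha}n^{-k/\alpha}$ on $Y_k=\Ran(I-T)^k$; each $X_k=\Fix T\oplus Y_k$ is dense; and the Esterle--Mittag-Leffler theorem \cite{Est84} shows that $X_0=\bigcap_{k\ge1}X_k$ is still a dense subspace, on which the decay is superpolynomial. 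The iteration-plus-Mittag-Leffler device is the missing idea; without it, or a completed boundary functional calculus, the final assertion of the theorem is not established.
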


\begin{proof}
Since $T$ is assumed to be power-bounded and $X$ is reflexive it follows from classical ergodic theory that $X=\Fix T\oplus Z$; see for instance \cite[Section~2.1]{Kre85}. Let $Y=\Ran (I-T)$. Since $T$ is power-bounded and $\sigma(T)\cap\T\subseteq\{1\}$, it follows from the Katznelson-Tzafriri theorem \cite[Theorem~1]{KT86} that $\|T^nx\|\to0$ as $n\to\infty$ for all $x\in Y$. By a simple density argument the same is true for all $x\in Z$. Since $T^nx=x$ for all $x\in \Fix T$ and $n\ge0$, we may deduce \eqref{eq:conv}. 

Note that both of the spaces $\Fix T$ and $Z$ are invariant under $T$. Thus if we let $S$ denote the restriction of $T$ to $Z$, then it is easy to see that $\sigma(S)\subseteq\sigma(T)\subseteq\DD\cup\{1\}$, and in particular $\sigma(S)\cap\T\subseteq\{1\}$. Note also that $I_Z-S$ maps $Z$ bijectively onto $Y$. It follows from the Inverse Mapping Theorem that $1\in\sigma(S)$ if and only if $Y\ne Z$, which is to say if and only if $Y$ is not closed. Thus if $Y$ is closed then $r(S)<1$ and we may find, for each $r\in (r(S),1)$ a suitable constant $C>0$ such that 
$$\|T^n-P_T\|\le \|S^n\|\|I-P_T\|\le Cr^n,\quad n\ge0,$$
so \eqref{eq:exp} holds. On the other hand, if $Y$ is not closed then $r(S)=1$ and it follows from \cite[Theorem~1]{Mue88} that the convergence in \eqref{eq:conv} is arbitrarily slow. Furthermore, the space $X$, being reflexive, does not contain an isomorphic copy of $c_0$, so it follows from \cite[Theorem~1]{Mue05} that the convergence in \eqref{eq:conv} is weakly arbitrarily slow. Now using   \cite[Theorem~2.5]{Sei15b} or \cite[Theorem~2.11]{Sei16} it follows from  assumption \eqref{eq:res} that
\begin{equation}\label{eq:rate}
\|S^nx\|=O\bigg(\frac{(\log n)^{1/\alpha}}{n^{1/\alpha}}\bigg),\quad n\to\infty,
\end{equation}
for all $x\in Y$. If we let $Y_k=\Ran(I-T)^k$, $k\ge1$, then iterating the estimate in \eqref{eq:rate} shows that for  $x\in Y_k$, $k\ge1$, we have
$$\|S^nx\|=O\bigg(\frac{(\log n)^{k/\alpha}}{n^{k/\alpha}}\bigg),\quad n\to\infty.$$
 Let $X_k=\Fix T\oplus Y_k$, $k\ge1$. Then each $X_k$ is dense in $X$ and, for $k\ge1$ and $x\in X_k$, we have 
$$\|T^nx-P_Tx\|=O\bigg(\frac{(\log n)^{k/\alpha}}{n^{k/\alpha}}\bigg),\quad n\to\infty.$$
Now let $X_0=\bigcap_{k=1}^\infty X_k$. It follows from the Esterle-Mittag-Leffler theorem \cite[Theorem~2.1]{Est84} that $X_0$ is a dense subspace of $X$, and it is clear that for all $x\in X_0$ the convergence in \eqref{eq:conv} is superpolynomially fast.
\end{proof}

\begin{remark}
 \label{rem:Hilbert}
Given a Banach space $X$ and an operator $T\in \B(X)$ with $r(T)\le 1$, the resolvent condition \eqref{eq:res} is equivalent to having $
\|R(\lambda,T)\|=O(|\lambda - 1|^{-\alpha})$ as $\lambda\to1$ with  $|\lambda| > 1$; see  \cite[Lemma~3.3]{CoLi16} and \cite[Lemma~3.9]{Sei16}. If these equivalent conditions are satisfied for $\alpha=1$ then $T$ is said to be a \emph{Ritt operator}. It was shown in \cite{Ly99, NaZe99} that $T$ is a Ritt operator if and only if $T$ is power-bounded and satisfies $\|T^n(I-T)\|=O(n^{-1})$ as $n\to\infty$. Thus for the case of Ritt operators the proof of Theorem~\ref{thm:gen} can be simplified, and in particular the logarithmic terms are not needed. By \cite[Theorem~3.10]{Sei16} the logarithmic factors are also redundant if $X$ is a Hilbert space.
\end{remark}

\section{The numerical range and generalised Stolz domains}\label{sec:numran}

Theorem~\ref{thm:gen} gives a detailed description of the asymptotic behaviour of orbits of certain bounded linear operators $T$ in terms of the growth of the resolvent. In general, when estimating the growth of the resolvent one needs precise spectral information about $T$. One important case in which such information is available is when the location and the geometry of the numerical range of $T$ are known.  

We begin by recalling the notion of numerical range for a bounded linear operator $T$ acting on a Banach space $X$; see \cite{BoDuNR} for more information. Let $J\colon X\to X^*$  be an isometry with the property that $\langle x, \phi_x \rangle  
= \|x\|^2$ for all $x\in X$, where $\phi_x=J(x)$; note that the existence of such maps  is a straightforward consequence of the Hahn-Banach theorem.  We define the \emph{numerical range} of $T\in \B(X)$ as 
$$ W(T) = \{\langle Tx,\phi_x\rangle : \|x\| = 1\}.$$
Although the numerical range of $T$ depends on the choice of the map $J$, its closed convex hull $\mathrm{co}\,W(T)$ does not.  Indeed, according to \cite[Theorem~6]{Tohoku} (see also \cite{BoDuNR}) we have $ \mathrm{co}\, W(T) = W_0(T),$ where 
$$W_0(T) = \{\phi(T) :  \phi\in \B(X)^{\ast}, \|\phi\| = \phi(I_{X}) = 1\}.$$
If $X$ is a Hilbert space then $W_0(T)$ coincides with the closure of the usual numerical range $W(T)$ of $T$; see \cite[page~420]{Tohoku}.  

Our main aim in this section is to obtain a version of Theorem~\ref{thm:gen} under conditions involving geometric assumptions on the numerical range of the operator $T$. We say that a non-empty closed set $\Omega\subseteq\DD\cup\{1\}$ is a \emph{generalised Stolz domain} if there exist constants $c ,\varepsilon > 0$ and $\alpha\ge1$ such that 
\begin{equation}\label{eq:stolz}
1-|\lambda|\ge c|\lambda-1|^{\alpha}
\end{equation}
for all $\lambda\in \Omega$ with  $|\lambda-1|\le \varepsilon$. In particular, any closed subset of a generalised Stolz domain is itself a generalised Stolz domain. If \eqref{eq:stolz} holds for a set $\Omega$ then  we say that $\Omega$ is a \emph{generalised Stolz domain with (Stolz) parameter $\alpha$}. Note that if $\Omega$ is a generalised Stolz domain with parameter $\alpha$, then  $\Omega$ is also a generalised Stolz domain with parameter $\beta$ for any $\beta\ge\alpha$. If $\Omega$ is the convex hull of the set $\{\lambda\in\CC:|\lambda|\le r\}\cup\{1\}$ for some $r\in(0,1)$, then $\Omega$ is said to be a \emph{Stolz domain}; note, however, that there is some inconsistency in the use of this terminology throughout the literature. Any Stolz domain is a  generalised Stolz domain with parameter $\alpha=1$.  Other important examples of generalised Stolz domains are sets of the form $\{\lambda\in\CC:|\lambda-r|\le 1-r\}$ for some $r\in (0,1)$. In the language of hyperbolic geometry such sets (or, more precisely, their boundaries) are examples of \emph{horocycles}, and they are generalised Stolz domains with parameter $2$. Another class of regions closely related to generalised Stolz domains are so-called \emph{quasi-Stolz} domains; see \cite{CoLi16, Pau12} for details.

\begin{remark}
If $\Omega$ is a Stolz domain with parameter $\alpha\ge1$, then in the terminology of \cite{SpSt96,SpSt97,SpSt98} we have that $1$ is a point of contact of type $\alpha-1$ between $\Omega$ and the unit circle $\T$. More general subsets of the closed unit disc, even ones touching the unit circle in a finite number of points, have been considered in \cite{SpSt96,SpSt97,SpSt98} in relation to stability of step-by-step methods for the numerical solution of differential equations. We refer to these three papers for more information and to \cite[Ch.II.4]{Pog} for the similar notion of order of contact between curves. 
\end{remark}

\begin{proposition}\label{prp:domain}
Let $X$ be a  Banach space and suppose that $T\in\B(X)$ is a power-bounded operator such that $W_0(T)$ is a generalised Stolz domain with parameter $\alpha\ge1$. Then $\sigma(T)\cap\T\subseteq\{1\}$ and \eqref{eq:res} holds. 
\end{proposition}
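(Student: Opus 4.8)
The plan is to combine two standard facts about the Banach space numerical range with a purely geometric estimate for distances to $W_0(T)$. The two facts I would invoke are the spectral inclusion $\sigma(T)\subseteq W_0(T)$ and the accompanying resolvent bound: for every $\lambda\in\CC\setminus W_0(T)$ one has $\lambda$ in the resolvent set together with
$$\|R(\lambda,T)\|\le\frac{1}{\dist(\lambda,W_0(T))}.$$
Both rest on the defining inequality of the numerical range: for a unit vector $x$ with $\phi_x=J(x)$ one has $\langle(\lambda I-T)x,\phi_x\rangle=\lambda-\langle Tx,\phi_x\rangle$, and since $\langle Tx,\phi_x\rangle\in W(T)\subseteq W_0(T)$ while $\|\phi_x\|=\|x\|=1$, this yields $\|(\lambda I-T)x\|\ge\dist(\lambda,W_0(T))\|x\|$. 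I would cite \cite{BoDuNR} for the spectral inclusion, which in turn gives surjectivity of $\lambda I-T$ and hence the displayed bound.

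Given these, the first assertion is immediate: a generalised Stolz domain satisfies $W_0(T)\subseteq\DD\cup\{1\}$ by definition, so $\sigma(T)\cap\T\subseteq W_0(T)\cap\T\subseteq(\DD\cup\{1\})\cap\T=\{1\}$. In particular every point $e^{i\theta}$ with $\theta\ne0$ small lies in $\T\setminus\{1\}$ and hence outside $W_0(T)$, so the resolvent bound applies and the remaining task is to bound $\dist(e^{i\theta},W_0(T))$ from below by a constant multiple of $|\theta|^{\alpha}$.

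For this geometric estimate I would first observe that $W_0(T)$, being a closed subset of the closed unit disc, is compact, and split it using the threshold $\ep$ from the Stolz condition. For $\lambda\in W_0(T)$ with $|\lambda-1|\le\ep$ I would exploit two elementary lower bounds for $|e^{i\theta}-\lambda|$: the reverse triangle inequality gives $|e^{i\theta}-\lambda|\ge 1-|\lambda|\ge c|\lambda-1|^{\alpha}$ via \eqref{eq:stolz}, while directly $|e^{i\theta}-\lambda|\ge|e^{i\theta}-1|-|\lambda-1|$ with $|e^{i\theta}-1|\asymp|\theta|$. A short case analysis according to whether $|\lambda-1|$ is small or large compared with $|\theta|$ then delivers $|e^{i\theta}-\lambda|\ge c_1|\theta|^{\alpha}$, where one uses $\alpha\ge1$ and $|\theta|\le1$ to pass from $|\theta|$ to $|\theta|^{\alpha}$. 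For $\lambda\in W_0(T)$ with $|\lambda-1|\ge\ep$, compactness forces $M:=\max|\lambda|<1$, since such $\lambda$ are bounded away from the only contact point $1$; thus $|e^{i\theta}-\lambda|\ge 1-|\lambda|\ge 1-M$, which dominates $c_1|\theta|^{\alpha}$ once $|\theta|$ is small. Taking the infimum over $\lambda\in W_0(T)$ yields $\dist(e^{i\theta},W_0(T))\ge c_1|\theta|^{\alpha}$, and substituting into the resolvent bound proves \eqref{eq:res}.

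The main obstacle is the case analysis in the region near $1$: each of the two lower bounds degenerates on its own, the Stolz bound when $\lambda$ approaches $1$ tangentially and the triangle bound when $\lambda$ approaches $1$ radially, and the content is precisely that they cannot degenerate at the same time. Balancing them at the threshold $|\lambda-1|\sim|\theta|$ is what produces the exponent $\alpha$, and the care needed is to keep all constants uniform in $\theta$.
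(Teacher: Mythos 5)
Your proposal is correct and follows essentially the same route as the paper: spectral inclusion $\sigma(T)\subseteq W_0(T)$, the resolvent bound $\|R(\lambda,T)\|\le\dist(\lambda,W_0(T))^{-1}$ off the numerical range, and a lower bound $\dist(e^{i\theta},W_0(T))\gtrsim|\theta|^{\alpha}$. The only difference is that where you carry out the two-case geometric estimate by hand (correctly, including the compactness argument away from $1$ and the balancing at $|\lambda-1|\sim|\theta|$), the paper simply cites Lemma~5.1 of \cite{SpSt96} for that step.
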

\begin{proof}
Since $\sigma(T)\subseteq W_0(T)$ by \cite[Theorem~1]{Tohoku} we have that $\sigma(T)\cap\T\subseteq\{1\}$.  It follows from \cite[Lemma~1]{Tohoku} that
\begin{equation}\label{eq:res_bd}
\|R(e^{i\theta},T)\|\le \frac{1}{\dist(e^{i\theta},W_0(T))},\quad 0<|\theta|\le\pi .
\end{equation}
By \cite[Lemma 5.1]{SpSt96} there exists a constant $c>0$ such that 
$$ \dist(e^{i\theta},W_0(T)) \ge c|\theta|^\alpha,\quad 0<|\theta|\le\pi,$$
which together with \eqref{eq:res_bd} immediately implies \eqref{eq:res}.
\end{proof}

\begin{remark}\label{rem:horo}
If $X$ is a Hilbert space then it follows from the inclusion of the numerical range in the closed unit disc that $\sup_{n\ge0}\|T^n\| \le 2$ (\cite[Ch.1,\S 11]{NaFo}), so the condition of power-boundedness holds automatically in this case. Hilbert space contractions with numerical range included in a Stolz domain were called quasi-sectorial in \cite{CaZa01}. It was proved in \cite{BaSe16} that a Hilbert space operator is a so-called unconditional Ritt operator  if and only if it is similar to an operator whose numerical range is contained in a Stolz domain.
If $r\in(0,1)$ and $\|T-rI\| \le 1-r$,  then $W_0(T)\subseteq\{\lambda\in\CC:|\lambda-r|\le1-r\}$ and in particular \eqref{eq:res} holds for $\alpha=2$. Note also that $\|T-rI\| \le 1-r$ implies that $\|T\| \le 1$. Hilbert space operators satisfying $\|T-rI\| \le 1-r$ for some $r\in(0,1)$ are characterised in \cite{Dun07, Dye89}.
\end{remark}

The next result gives a sufficient condition in the Hilbert space setting for the resolvent growth condition in Theorem~\ref{thm:gen} to be satisfied.

\begin{proposition}\label{prp:zn}
Let $X$ be a Hilbert space and let $T\in \B(X)$. Suppose that the closure $\Omega$ of the numerical range $W(T)$ of $T$
is  contained in the closed unit disc and that there exist constants $C>0$ and $\beta\in(0,1]$ such that
$$ \sup\{|\lambda^n(1-\lambda)| : \lambda \in \Omega \} \le Cn^{-\beta}, \quad n\ge 1.$$
Then $T$ is power-bounded, $\sigma(T)\cap\T\subseteq\{1\}$ and \eqref{eq:res} holds for $\alpha=1/\beta$.
\end{proposition}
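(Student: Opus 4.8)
The plan is to reduce the statement to Proposition~\ref{prp:domain} by showing that the set $\Omega=\overline{W(T)}$ is a generalised Stolz domain with parameter $\alpha=1/\beta$. First note that, since $X$ is a Hilbert space and $\Omega\subseteq\overline{\DD}$, Remark~\ref{rem:horo} gives $\sup_{n\ge0}\|T^n\|\le2$, so $T$ is automatically power-bounded. Moreover, in the Hilbert space setting $W_0(T)$ coincides with $\overline{W(T)}=\Omega$, as recorded above. Hence, once we know that $\Omega$ is a generalised Stolz domain with parameter $1/\beta$, Proposition~\ref{prp:domain} will immediately yield both $\sigma(T)\cap\T\subseteq\{1\}$ and \eqref{eq:res} with $\alpha=1/\beta$. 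The whole proof therefore reduces to verifying \eqref{eq:stolz} for $\Omega$ with exponent $1/\beta$; note that $\beta\in(0,1]$ ensures $1/\beta\ge1$, as required by the definition.

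The easy part is to check that $\Omega\subseteq\DD\cup\{1\}$. Indeed, if there were some $\lambda\in\Omega\cap\T$ with $\lambda\ne1$, then $|\lambda^n(1-\lambda)|=|1-\lambda|$ would be a strictly positive constant independent of $n$, contradicting the hypothesis $\sup\{|\lambda^n(1-\lambda)|:\lambda\in\Omega\}\le Cn^{-\beta}\to0$. Thus $\Omega$ meets $\T$ at most in $\{1\}$, and in particular $\sigma(T)\cap\T\subseteq\{1\}$ already follows from $\sigma(T)\subseteq\Omega$.

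The main obstacle, and the only real work, is to convert the decay hypothesis into the geometric inequality \eqref{eq:stolz}. Fix $\lambda\in\Omega$ with $\lambda\ne1$ and write $\rho=|\lambda|\in[0,1)$ and $d=|1-\lambda|$, so that the hypothesis reads $\rho^nd\le Cn^{-\beta}$ for every $n\ge1$. The idea is to choose $n$ as large as possible while keeping $\rho^n$ bounded below, namely $n=\lfloor(1-\rho)^{-1}\rfloor$, which is at least $1$ once $d$ (hence $1-\rho\le d$) is small. For this choice one has $n(1-\rho)\le1$, and using $\log\rho\ge-(1-\rho)/\rho$ one finds $\rho^n\ge e^{-2}$ provided $\rho\ge\tfrac12$, which is guaranteed when $d\le\tfrac12$. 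Substituting $\rho^n\ge e^{-2}$ into $\rho^nd\le Cn^{-\beta}$ gives $d\le Ce^2n^{-\beta}$, and since $n\ge\tfrac12(1-\rho)^{-1}$ when $1-\rho\le\tfrac12$ we obtain $d\le Ce^2\,2^\beta(1-\rho)^\beta$. Rearranging yields $1-|\lambda|\ge c\,|1-\lambda|^{1/\beta}$ with $c=(Ce^2\,2^\beta)^{-1/\beta}$, valid for all $\lambda\in\Omega$ with $|1-\lambda|$ sufficiently small (the case $\lambda=1$ being trivial).

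This is precisely \eqref{eq:stolz} with $\alpha=1/\beta$, so $\Omega$ is a generalised Stolz domain with that parameter. Since $T$ is power-bounded and $W_0(T)=\Omega$, Proposition~\ref{prp:domain} then delivers $\sigma(T)\cap\T\subseteq\{1\}$ and \eqref{eq:res} with $\alpha=1/\beta$, completing the argument. The delicate point throughout is the near-optimal calibration of $n$ against $1-\rho$: too small a choice of $n$ loses the exponent $1/\beta$, while too large a choice fails to keep $\rho^n$ bounded away from $0$, so the balance $n\asymp(1-\rho)^{-1}$ is what produces the correct Stolz parameter.
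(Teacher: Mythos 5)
Your proof is correct, but it takes a genuinely different route from the paper's. You convert the scalar decay hypothesis directly into the geometric statement that $\Omega$ is a generalised Stolz domain with parameter $1/\beta$ --- via the calibrated choice $n=\lfloor(1-|\lambda|)^{-1}\rfloor$, which I have checked and which works (the bound $\rho^n\ge e^{-2}$ for $\rho\ge\frac12$, the comparison $1-|\lambda|\le|1-\lambda|$, and the passage from $n^{-\beta}$ to $(1-\rho)^\beta$ are all in order) --- and then you invoke Proposition~\ref{prp:domain}, whose proof rests only on the elementary Stampfli--Williams estimate $\|R(\lambda,T)\|\le 1/\dist(\lambda,W_0(T))$ and a distance lemma of Spijker--Straetemans; there is no circularity, since Proposition~\ref{prp:domain} does not depend on Proposition~\ref{prp:zn}, and your identification $W_0(T)=\overline{W(T)}=\Omega$ is valid in the Hilbert space setting. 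The paper instead uses the Crouzeix--Palencia theorem, namely that $\Omega$ is a $(1+\sqrt2)$-spectral set for $T$, applies it to the rational function $u(\lambda)=\lambda^n(\lambda-1)$ to obtain the operator decay $\|T^n(I-T)\|\le CKn^{-\beta}$, and then converts this into \eqref{eq:res} with $\alpha=1/\beta$ via \cite[Theorem~3.10]{Sei16}. Your argument is more elementary, avoiding the deep Crouzeix--Palencia input entirely, and it interacts nicely with the paper's own remark that Proposition~\ref{prp:zn} can be used to reprove Proposition~\ref{prp:domain} in Hilbert space (you go in the opposite direction). What the paper's route buys, and yours does not, is the intermediate conclusion $\|T^n(I-T)\|=O(n^{-\beta})$, which is of independent interest and is precisely what underlies the subsequent remark that the proof of Theorem~\ref{thm:gen} simplifies (the logarithmic factors disappear) in this setting; if you only establish the resolvent bound, that dividend is lost.
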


\begin{proof}
Since $\Omega$ is contained in the closed unit disc,  the operator $T$ is power-bounded.  Suppose that $\lambda\in \Omega\cap \T$. Then 
$$ |\lambda-1| = |\lambda^n(\lambda-1)| \le Cn^{-\beta}$$
for all $n\ge 1$ and hence $\lambda=1$. Thus $\Omega\cap\T\subseteq\{1\}$, and since $\sigma(T)\subseteq\Omega$ we deduce that $\sigma(T)\cap\T\subseteq\{1\}$. Now let $K=1+\sqrt{2}$. By \cite{CrPa} the set $\Omega$ is a $K$-spectral set for $T$, in the sense that for every rational function $u$ with poles 
outside $\Omega$ we have 
$$ \|u(T)\| \le K\sup\{|u(\lambda)| :\lambda\in\Omega\}.$$
In particular, choosing $u(\lambda)=\lambda^n(\lambda-1)$ shows that 
$$ \|T^n(I-T)\|  \le  CKn^{-\beta},\quad n\ge1.$$
The result now follows from \cite[Theorem~3.10]{Sei16}.
\end{proof}

\begin{remark}
The above argument shows that in the setting of Proposition~\ref{prp:zn} the proof of Theorem~\ref{thm:gen} can  be simplified; see also Remark~\ref{rem:Hilbert}. Note furthermore that it is possible to use Proposition~\ref{prp:zn} to give an alternative proof of Proposition~\ref{prp:domain} for Hilbert space operators.  
\end{remark}

\section{Applications to the method of alternating projections}\label{sec:appl1}
In this section we apply the general results of the previous sections to Banach space versions of the method of alternating projections. Recall that, given a Banach space $X$,  a linear operator $P\colon X\to X$ is said to be a \emph{projection}
if $P^2=P$ or, equivalently, if $\Ker P=\Ran(I-P)$. It is clear that if $P\in\B(X)$ is a bounded projection  then either $P=0$ or
$\|P\|\ge 1$. A bounded projection $P\in\B(X)$ is said to be an \emph{orthoprojection}
if $\|P\|\le 1$.
If $X$ is a  Hilbert space then a projection $P\in\B(X)$ is an orthoprojection if and only if $\Ker P$ and $\Ran P$ are mutually orthogonal. 

\begin{remark}\label{rem:orth}
Note that  the projection $P_T$ appearing in Theorem~\ref{thm:gen} satisfies $\|P_T\|\le\sup_{n\ge0}\|T^n\|$, as can be seen from \eqref{eq:conv}. In particular, $P_T$ is an orthoprojection whenever $T$ is a contraction.
\end{remark}

Let $X$ be a Hilbert space, $N\ge2$, and suppose that $M_1,\dotsc,M_N$ are closed subspaces of $X$. For $1\le k\le N$ we let $P_k$ denote the orthogonal projection onto $M_k$, and we write $P_M$ for the orthogonal projection onto the intersection $M=M_1\cap\dotsc \cap M_N$. In many applications one wishes to study sequences in $X$ which are obtained by picking a starting vector $x\in X$ and then projecting $x$ cyclically onto the subspaces $M_1,\dots,M_N$; see for instance \cite{De92} and \cite[Chapter~9]{De01}. One is naturally led therefore to consider the operator $T\in\B(X)$ given by $T=P_N\cdots P_1$, and it is a classical result due to Halperin \cite{Hal62} that 
\begin{equation}\label{eq:Halperin}
\|T^nx-P_Mx\|\to0,\quad n\to\infty,
\end{equation}
for all $x\in X$; see \cite{BGM11, BaSe16, PuReZa13} for a discussion of  the rate of convergence in \eqref{eq:Halperin} and its dependence on the geometric relationship between the subspaces $M_1,\dotsc,M_N$. 

Our main interest here is in obtaining quantified versions of Halperin's theorem for Banach spaces with special geometric properties. Recall that a Banach space $X$ is said to be \emph{uniformly convex} if for every $\varepsilon\in (0,2]$ there exists $\delta>0$ such that for any two vectors $x,y\in X$
with $\|x\|\le 1$ and $\|y\|\le 1$ the inequality $\|x+y\|/2>1-\delta$
implies $\|x-y\|<\varepsilon$. Halperin's theorem was generalised to products of orthoprojections in uniformly convex Banach spaces by Bruck and Reich \cite{BruRei77}; see also \cite{BaLy10} and the references therein. Our aim now is to obtain a quantified result of this type, and for this we require some further terminology. Recall therefore that, given a Banach space $X$, the increasing function $\delta_X\colon[0,2]\to[0,\infty)$ given by 
$$\delta_X(\varepsilon)=\inf\left\{1-\frac{\|x+y\|}{2}:\|x\|\le 1,\|y\|\le 1,\|x-y\|\ge\varepsilon\right\}$$
is called the \emph{modulus of convexity} of the space $X$. Thus a Banach space $X$ is uniformly convex if and only if $\delta_X(\varepsilon)>0$ for all $\varepsilon\in(0,2]$. 
Any uniformly convex Banach space is reflexive and every Hilbert space is uniformly convex. Indeed, if $X$ is a Hilbert space then $\delta_X(\varepsilon)=\delta(\varepsilon)$ for all $\varepsilon\in[0,2]$, where 
$$\delta(\varepsilon) = 1-\left(1 -\frac{\varepsilon^2}{4}\right)^{1/2}\sim \frac{\varepsilon^2}{8}, \quad \varepsilon \to 0 .$$
For any Banach space $X$ we have $\delta_X(\varepsilon) \le \delta(\varepsilon)$ for all $\varepsilon\in[0,2]$; see \cite{Nor60}. For $q \ge 2$ we say that a Banach space $X$ is $q$-\emph{uniformly convex}  if there exists a constant  $c> 0$ such that $\delta_X(\varepsilon)\ge c\varepsilon^q$ for all $\varepsilon\in[0,2]$. For instance,  any $L^p$-space is $q$-uniformly convex for $q=\max\{2,p\}$ when $1<p<\infty$. According to a result of Pisier every uniformly convex Banach space can be renormed to be $q$-uniformly convex for some suitable $q\ge 2$. We refer for instance to \cite[Ch.~1.e]{LiTz} and \cite[Ch.~10]{Marting} for these results and for more information about uniformly convex Banach spaces. The first main result of this section is the following quantified version of Halperin's theorem for products of projections acting on uniformly convex Banach spaces.

\begin{theorem}\label{thm:uc}
Let $X$ be a Banach space which is $q$-uniformly convex for some $q\ge2$ and suppose that $T=P_N\cdots P_1$ for certain orthoprojections $P_1, \dots, P_N\in\B(X)$. Furthermore, let $M= \Ran P_1\cap\dotsc\cap \Ran P_N$. Then $X=M\oplus Z$, where $Z$ denotes the closure of $\Ran(I-T)$. Furthermore,  for all $x\in X$ we have
\begin{equation}\label{eq:convuc}
\|T^nx-Px\|\to0,\quad n\to\infty,
\end{equation}
where $P$ denotes the orthoprojection onto $M$ along $Z$. 
Moreover, there is a dichotomy for the rate of convergence. Indeed, if $\Ran(I-T)$ is closed then there exist $C>0$ and $r\in[0,1)$ such that
\begin{equation*}\label{eq:expuc}
\|T^n-P\|\le C r^n, \quad n\ge0,
\end{equation*}
whereas if $\Ran(I-T)$ is not closed then the convergence in \eqref{eq:convuc} is arbitrarily slow and weakly arbitrarily slow. In either case there exists a dense subspace $X_0$ of $X$ such that for all $x\in X_0$ the convergence in \eqref{eq:convuc} is superpolynomially fast.
\end{theorem}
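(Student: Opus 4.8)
The plan is to deduce the theorem from Theorem~\ref{thm:gen} applied to $T=P_N\cdots P_1$. Three of its hypotheses are immediate: since $X$ is $q$-uniformly convex it is uniformly convex and hence reflexive, and since each $P_k$ is an orthoprojection we have $\|T\|\le1$, so $T$ is a contraction and in particular power-bounded. To see that the fixed space is the one claimed, note that if $Tx=x$ then $\|x\|=\|P_N\cdots P_1x\|\le\|P_1x\|\le\|x\|$ forces $\|P_k\cdots P_1x\|=\|P_{k-1}\cdots P_1x\|$ at every stage; strict convexity (a consequence of $q$-uniform convexity) then gives $P_kx_{k-1}=x_{k-1}$ for $x_k:=P_k\cdots P_1x$, whence $\Fix T=\Ran P_1\cap\dots\cap\Ran P_N=M$, and by Remark~\ref{rem:orth} the projection $P_T$ of Theorem~\ref{thm:gen} is the orthoprojection $P$ onto $M$ along $Z$. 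It remains to verify $\sigma(T)\cap\T\subseteq\{1\}$ and the resolvent bound \eqref{eq:res}, and for this I would apply Proposition~\ref{prp:domain}, so the whole problem reduces to showing that $W_0(T)$ is a generalised Stolz domain with parameter $q$.

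For this I would work with the explicit region $\Omega_0=\{\lambda\in\CC:c\,|1-\lambda|^q+|\lambda|\le1\}$, where $c>0$ is to be chosen. Because $q\ge1$, both $\lambda\mapsto|1-\lambda|^q$ and $\lambda\mapsto|\lambda|$ are convex, so $\Omega_0$ is a closed convex set; moreover on $\Omega_0$ one has $|\lambda|\le1$ with equality only at $\lambda=1$, so $\Omega_0\subseteq\DD\cup\{1\}$ and $\Omega_0$ satisfies \eqref{eq:stolz} with parameter $q$, i.e. it is a generalised Stolz domain. Being closed and convex, $\Omega_0$ contains $W_0(T)=\mathrm{co}\,W(T)$ as soon as it contains $W(T)$; hence it suffices to prove $W(T)\subseteq\Omega_0$, for then $W_0(T)$ is a closed subset of a generalised Stolz domain and so is one itself. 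Fix a unit vector $x$ and put $\phi_x=J(x)$ and $\lambda=\langle Tx,\phi_x\rangle$. Since $|\lambda|\le\|Tx\|$ and $|1-\lambda|=|\langle(I-T)x,\phi_x\rangle|\le\|(I-T)x\|$, the membership $\lambda\in\Omega_0$ will follow from the single geometric inequality
\[
1-\|Tx\|\ge c\,\|(I-T)x\|^q .
\]

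The main obstacle is this last inequality, which I would establish by a telescoping argument. Writing $x_0=x$ and $x_k=P_kx_{k-1}$, so that $\|x_k\|\le\|x_{k-1}\|\le1$, one has $1-\|Tx\|=\sum_{k=1}^N d_k$ with $d_k=\|x_{k-1}\|-\|x_k\|\ge0$, while $\|(I-T)x\|\le\sum_{k=1}^N\|x_{k-1}-x_k\|$. The crux is a per-step estimate: for a contractive projection $P$ and $\|y\|\le1$, setting $w=(I-P)y$ and using $P\bigl(\tfrac{y+Py}{2}\bigr)=Py$ (so that $\|Py\|\le\bigl\|\tfrac{y+Py}{2}\bigr\|$) together with the Clarkson-type inequality $\bigl\|\tfrac{y+Py}{2}\bigr\|^q+c_0\|w\|^q\le\tfrac12(\|y\|^q+\|Py\|^q)$ characterising $q$-uniform convexity, one gets $c_0\|w\|^q\le\tfrac12(\|y\|^q-\|Py\|^q)\le\tfrac{q}{2}(\|y\|-\|Py\|)$, the last step because $\|y\|\le1$. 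Finding the right pair of vectors for the convexity inequality is the delicate point; once it is in place, applying it at each step gives $\|x_{k-1}-x_k\|^q\le\frac{q}{2c_0}\,d_k$, and H\"older's inequality yields $\|(I-T)x\|\le N^{1-1/q}\bigl(\frac{q}{2c_0}\sum_k d_k\bigr)^{1/q}$, which is exactly the displayed inequality for a suitable $c$.

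With $W_0(T)$ shown to be a generalised Stolz domain of parameter $q$, Proposition~\ref{prp:domain} gives $\sigma(T)\cap\T\subseteq\{1\}$ and \eqref{eq:res} with $\alpha=q\ge2$, and Theorem~\ref{thm:gen} then delivers the decomposition $X=M\oplus Z$, the convergence \eqref{eq:convuc}, the exponential/arbitrarily-slow dichotomy, and the dense subspace of superpolynomially fast vectors. I would expect the references \cite{BruRei77} and \cite{LiTz} to supply the convexity inequality used above, and I note that the argument is insensitive to the choice of duality map $J$, since only $W_0(T)=\mathrm{co}\,W(T)$ enters.
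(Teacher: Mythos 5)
Your proposal is correct, and although it shares the paper's overall architecture (reduce to Theorem~\ref{thm:gen} via Proposition~\ref{prp:domain} by showing that $W_0(T)$ is a generalised Stolz domain, and identify $\Fix T$ with $M$), the heart of the argument is genuinely different and in fact sharper. The paper proves by induction on the number of factors (Lemma~\ref{lem:halperin}) that $\|x-Tx\|\lesssim(1-\|Tx\|)^{1/q^N}$ for unit vectors $x$, hence that $W_0(T)$ is a generalised Stolz domain with parameter $q^N$ (Theorem~\ref{thm:uc_stolz}); you instead telescope through the intermediate vectors $x_k=P_k\cdots P_1x$, prove the per-step estimate $\|x_{k-1}-x_k\|^q\lesssim\|x_{k-1}\|-\|x_k\|$, and sum with H\"older to obtain $\|x-Tx\|^q\lesssim 1-\|Tx\|$, i.e.\ a generalised Stolz domain with parameter $q$. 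Since your exponent does not deteriorate with $N$, this yields \eqref{eq:res} with $\alpha=q$ rather than $\alpha=q^N$; the conclusions of Theorem~\ref{thm:uc} as stated are unaffected (superpolynomial decay on a dense subspace holds either way), but your intermediate result strengthens Theorem~\ref{thm:uc_stolz} and gives better explicit rates on the subspaces $X_k$ appearing in the proof of Theorem~\ref{thm:gen}. Two small remarks. First, your per-step estimate does not actually require the functional (Clarkson-type) characterisation of $q$-uniform convexity, which is a nontrivial theorem of Pisier (see \cite[Ch.~10]{Marting} or Ball--Carlen--Lieb; it is not in \cite{BruRei77}): applying the definition of $\delta_X$ directly to the normalised pair $y/\|y\|$, $Py/\|y\|$ and using $\|Py\|\le\|\tfrac12(y+Py)\|$ gives $c\|y-Py\|^q\le\|y\|^{q-1}(\|y\|-\|Py\|)\le\|y\|-\|Py\|$ for $0<\|y\|\le1$, which is all you need. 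Second, your device of the explicit closed convex region $\Omega_0=\{\lambda\in\CC:c|1-\lambda|^q+|\lambda|\le1\}$ is a clean and fully rigorous way to pass from $W(T)$ to its closed convex hull $W_0(T)$, arguably tidier than the paper's brief appeal to concavity and monotonicity of $t\mapsto(1-t)^{1/q^N}$ and $t\mapsto t^{1/q^N}$.
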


We begin with the following lemma. 

\begin{lemma}\label{lem:halperin}
In the setting of Theorem~\ref{thm:uc} there exists $C>0$ such that
\begin{equation}
\label{eq:halperin}
\left\|x - Tx\right\| \le C \left( 1 - \|Tx\|\right)^{1/q^N} 
\end{equation}
for all $x\in X$ with $\|x\|=1$.
\end{lemma}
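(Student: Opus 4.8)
The plan is to exploit the geometry of uniform convexity iteratively, one orthoprojection at a time, reducing the case of the product $T = P_N \cdots P_1$ to a single orthoprojection. The key observation is that each $P_k$ is a norm-one projection on a $q$-uniformly convex space, so if $P_k$ barely shrinks a unit vector (i.e.\ $\|P_k u\|$ is close to $1$), then $u$ must itself be close to $P_k u$. Quantitatively, I would first establish the single-projection estimate: there is a constant $c > 0$ such that for any orthoprojection $P$ and any $u$ with $\|u\| \le 1$,
$$
\|u - Pu\| \le c\,(1 - \|Pu\|)^{1/q}.
$$
To see this, set $v = Pu/\|Pu\|$ (assuming $Pu \ne 0$; the degenerate cases are handled separately). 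Since $P$ is an orthoprojection and $Pv = v$, one checks that $\|u - v\|$ and $\|u + v\|$ can be controlled using $P(u+v) = Pu + v$ and the definition of $\delta_X$. Applying $q$-uniform convexity, $\delta_X(\varepsilon) \ge c_0 \varepsilon^q$, to the pair $u, v$ then yields the displayed bound after rearranging; the exponent $1/q$ is exactly what the power $\delta_X(\varepsilon) \ge c_0\varepsilon^q$ produces upon inversion.

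\smallskip

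Next I would iterate. Writing $x_0 = x$ and $x_k = P_k x_{k-1}$ for $1 \le k \le N$, so that $x_N = Tx$, each step satisfies $\|x_k\| \le \|x_{k-1}\| \le 1$ and, by the single-projection estimate,
$$
\|x_{k-1} - x_k\| \le c\,(1 - \|x_k\|)^{1/q} \le c\,(1 - \|Tx\|)^{1/q^{\,?}},
$$
where the subtlety is that the naive bound $(1 - \|x_k\|)$ at an intermediate stage need not be comparable to $(1 - \|Tx\|)$. The correct way to propagate the estimate is to note that $\|x_N\| \le \|x_k\| \le 1$ forces $1 - \|x_k\| \le 1 - \|x_N\| = 1 - \|Tx\|$ only in the wrong direction; instead one must bound $1 - \|x_{k-1}\|$ in terms of $1 - \|x_k\|$ and the gap $\|x_{k-1} - x_k\|$. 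The clean approach is to run the inequality backwards: the single-projection bound controls $1 - \|x_{k-1}\|$ by $1 - \|x_k\|$ up to a power, because $\|x_k\| = \|P_k x_{k-1}\|$ and $\|x_{k-1} - x_k\|^q \lesssim 1 - \|x_k\|$ gives, via the triangle inequality $\bigl|\,\|x_{k-1}\| - \|x_k\|\,\bigr| \le \|x_{k-1} - x_k\|$, a recursive estimate of the form $1 - \|x_{k-1}\| \le C_1 (1 - \|x_k\|)^{1/q}$. Composing these $N$ recursions turns the exponent $1/q$ into $1/q^N$, and summing the telescoping differences $\|x - Tx\| \le \sum_{k=1}^N \|x_{k-1} - x_k\|$ then gives \eqref{eq:halperin} with a constant depending only on $c$, $q$, and $N$.

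\smallskip

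The main obstacle I anticipate is exactly this bookkeeping of exponents across the composition. The difficulty is that the quantity $1 - \|x_k\|$ that naturally appears at stage $k$ is not the target quantity $1 - \|Tx\| = 1 - \|x_N\|$, so one cannot simply chain the single-step bounds additively; each backward substitution degrades the exponent multiplicatively, and one must verify that the resulting power $1/q^N$ is both correct and uniform (with the constant $C$ independent of $x$). Care is also needed for the degenerate cases where some $x_k = 0$ or $\|Tx\| = 1$: if $\|Tx\| = 1$ then since each $P_k$ is a contraction and $\|x\| = 1$, every intermediate norm equals $1$ and each $x_{k-1} = x_k$, forcing $x = Tx$ so that both sides of \eqref{eq:halperin} vanish; this boundary case should be checked first to ensure the estimate is vacuously or trivially true there. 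Once the single-projection inequality and the exponent arithmetic are pinned down, the lemma follows directly.
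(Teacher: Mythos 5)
Your argument does establish the lemma, and it takes a genuinely different route from the paper's, but the ``main obstacle'' you identify is not an obstacle at all: the inequality you dismiss as going ``in the wrong direction'' goes in exactly the direction you need. Your single-projection estimate is correct and coincides with the paper's base case (more directly than via the normalisation $v=Pu/\|Pu\|$: from $\tfrac12\|u+Pu\|\ge\tfrac12\|P(u+Pu)\|=\|Pu\|$ and the definition of $\delta_X$ applied to the pair $(u,Pu)$ one gets $c_0\|u-Pu\|^q\le\delta_X(\|u-Pu\|)\le1-\|Pu\|$ for all $\|u\|\le1$). In the telescoping sum $\|x-Tx\|\le\sum_{k=1}^N\|x_{k-1}-x_k\|$ each term is then at most $c(1-\|x_k\|)^{1/q}$; since each $P_k$ is a contraction the norms $\|x_k\|$ are non-increasing, so $\|x_k\|\ge\|x_N\|=\|Tx\|$ and hence $1-\|x_k\|\le1-\|Tx\|$, which is precisely the upper bound you want because $t\mapsto t^{1/q}$ is increasing. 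This gives $\|x-Tx\|\le cN(1-\|Tx\|)^{1/q}$ outright, which implies \eqref{eq:halperin} since $(1-\|Tx\|)^{1/q}\le(1-\|Tx\|)^{1/q^N}$ when $1-\|Tx\|\le1$. The backward recursion $1-\|x_{k-1}\|\le C_1(1-\|x_k\|)^{1/q}$ that you substitute for this is valid (indeed $1-\|x_{k-1}\|\le1-\|x_k\|$ holds trivially by the same monotonicity), and composing it does recover the exponent $1/q^N$, so your proof is not wrong --- it is just needlessly lossy and motivated by a misreading of which way the monotonicity runs.

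For comparison, the paper argues by induction on the number of factors, writing $T=PS$ and splitting $x-Tx=(x-Px)+P(x-Sx)$; because this forces one to estimate $\|x-Px\|$ with $P$ applied to the original $x$ rather than to $Sx$, the control of $1-\|Px\|$ passes through $\ep^{1/q^N}$ and a genuine factor of $q$ is lost in the exponent at each stage. Your telescoping decomposition through the intermediate iterates $x_k$ is the more natural one, and carried out with the correct monotonicity it yields the lemma with exponent $1/q$ in place of $1/q^N$ (and correspondingly a Stolz parameter $q$ rather than $q^N$ in Theorem~\ref{thm:uc_stolz}). Your treatment of the degenerate case $\|Tx\|=1$ is fine, though it becomes unnecessary once the estimate is written with explicit constants.
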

\begin{proof}
The proof is by induction. Suppose first that $N=1$ and that $T= P$ is an orthoprojection. 
Suppose now that $\|x\|=1$ 
and let $\ep=1-\|Px\|$. Then $\ep\in[0,1]$ and we have
\begin{equation}
\label{eq:opo}
\frac{1}{2}\|x+Px\| \ge \frac{1}{2}\|P(x+Px)\| = \|Px\| = 1-\ep.
\end{equation}
Consider the function $\beta_X \colon [0,1]\to[0,\infty)$ defined for $0\le s\le1$  by
$$\beta_X(s)=\sup\left\{\|x-y\|:\|x\|\le 1, \|y\|\le 1,\frac{\|x+y\|}{2}\ge 1-s\right\}.$$
Since $X$ is uniformly convex, we have that $\beta_X$ is continuous and increasing on $[0,1]$, with $\beta_X(0) = 0$ and $\beta_X(1) = 2$; see \cite{BHW}. We also have $\delta_X(\beta_X(s)) = s$ for all $s \in [0,1]$ by \cite[Theorem~3.4]{BHW}. It follows in particular that $\beta_X(s) \lesssim s^{1/q}$ for $0\le s\le 1$. Using \eqref{eq:opo} and the definition of $\beta_X$ we obtain
\begin{equation}\label{eq:Pest}
\left\|x - Px\right\| \le \beta_X(\ep) \lesssim \ep^{1/q} .
\end{equation}
Here and in what follows we write $a\lesssim b$ if  $a\le Cb$ for some $C>0$ which is independent of all parameters which are free to vary in the given situation. Thus \eqref{eq:halperin} is proved for $N=1$. 

Now assume that  \eqref{eq:halperin} is true for a product $S\in\B(X)$ of $N\ge 1$ orthoprojections and let $P\in\B(X)$ be a further orthoprojection. Set $T=PS$. Suppose that $\|x\|=1$ and let $\ep=1-\|Tx\| $. Using the induction hypothesis, we have
\begin{equation}\label{eq:interm}
\|x-Tx\|\le\|x-Px\|+\|P(x-Sx)\|\lesssim\|x-Px\|+\ep^{1/q^N},
\end{equation}
using the fact that $1-\|Sx\| \le  \ep$. 
 In order to estimate $\|x - Px\|$ we use the induction hypothesis to obtain
$$1-\|Px\|\le 1-\|Tx\|+\|P(x-Sx)\|\lesssim \ep  +\ep^{1/q^N} \lesssim \ep^{1/q^N},$$
so that by \eqref{eq:Pest} we have
$$ \|x-Px\| \le \beta_X(1 - \|Px\| ) \lesssim \ep^{1/q^{N+1}}. $$
Using this inequality in \eqref{eq:interm} we see that
$$\|x - Tx\| \lesssim \ep^{1/q^{N+1}} + \ep^{1/q^N} \lesssim  \ep^{1/q^{N+1}},$$
which completes the proof.
\end{proof}

Lemma~\ref{lem:halperin} leads to the following result, which is a key ingredient in the proof of Theorem~\ref{thm:uc} but is also of independent interest.

\begin{theorem}\label{thm:uc_stolz}
Let $X$ be a Banach space which is $q$-uniformly convex for some $q\ge2$ and suppose that $T=P_N\cdots P_1$ for certain orthoprojections $P_1, \dots, P_N\in\B(X)$. Then the numerical range $W_0(T)$ of $T$ is a generalised Stolz domain with parameter $q^N$. 
\end{theorem}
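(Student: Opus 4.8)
The plan is to show that $W_0(T)$, which is a compact convex subset of the closed unit disc (since each $P_k$ is a contraction, $T$ is a contraction and so $W_0(T)\subseteq\overline{\DD}$), touches the unit circle only at the point $1$ and does so with the correct rate of tangency, namely that it satisfies the generalised Stolz condition \eqref{eq:stolz} with parameter $q^N$. Since $W_0(T)=\mathrm{co}\,W(T)$ is convex and closed, it suffices to control the numerical range $W(T)$ itself near the unit circle and then argue that taking the convex hull does not worsen the order of tangency at $1$.

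The key computation is to take a unit vector $x\in X$ and study the point $\langle Tx,\phi_x\rangle\in W(T)$ when it lies close to $\T$. First I would observe that $|\langle Tx,\phi_x\rangle|\le\|Tx\|\le1$, so the modulus of a numerical range point is controlled by $\|Tx\|$. The heart of the matter is to relate $1-|\langle Tx,\phi_x\rangle|$ to $|\langle Tx,\phi_x\rangle-1|$. Writing $\lambda=\langle Tx,\phi_x\rangle$, the distance $|\lambda-1|$ is essentially controlled by $\|Tx-x\|$, since $\lambda-1=\langle Tx-x,\phi_x\rangle$ and $\|\phi_x\|=1$ gives $|\lambda-1|\le\|Tx-x\|$. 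This is exactly the quantity that Lemma~\ref{lem:halperin} estimates: we have $\|x-Tx\|\lesssim(1-\|Tx\|)^{1/q^N}$ for all unit vectors $x$. The plan is therefore to combine these two facts: the Stolz condition near $1$ asks for a lower bound on $1-|\lambda|$ in terms of $|\lambda-1|^{q^N}$, and since $1-|\lambda|\ge1-\|Tx\|$ while $|\lambda-1|\le\|x-Tx\|\lesssim(1-\|Tx\|)^{1/q^N}$, raising the latter to the power $q^N$ yields $|\lambda-1|^{q^N}\lesssim1-\|Tx\|\le 1-|\lambda|$, which is precisely \eqref{eq:stolz} with $\alpha=q^N$.

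Once the inequality $1-|\lambda|\ge c\,|\lambda-1|^{q^N}$ is established for every $\lambda\in W(T)$ with $|\lambda-1|$ small, I would pass to the closed convex hull. The region $\{\lambda\in\overline{\DD}:1-|\lambda|\ge c|\lambda-1|^{q^N}\}$ near $1$ is, up to constants, a generalised Stolz region, and a short geometric argument shows that the convex hull of $W(T)$ (intersected with a small neighbourhood of $1$) remains inside a possibly larger generalised Stolz region with the same parameter $q^N$; convexity does not decrease the order of contact at a boundary point of the disc, so $W_0(T)$ still satisfies \eqref{eq:stolz} with parameter $q^N$ for suitably adjusted constants $c,\varepsilon>0$. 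Together with the fact that $W_0(T)\subseteq\overline{\DD}$ and $W_0(T)\cap\T\subseteq\{1\}$ (the latter following from the pointwise estimate, which forces $|\lambda|<1$ unless $\lambda=1$), this identifies $W_0(T)$ as a generalised Stolz domain with parameter $q^N$.

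The main obstacle I anticipate is the passage from the numerical range $W(T)$ to its convex hull $W_0(T)$ while preserving the tangency order. The pointwise estimate controls only the individual numerical-range points $\langle Tx,\phi_x\rangle$, but the Stolz condition must hold on the full convex hull; one must check that convex combinations of points satisfying $1-|\lambda|\ge c|\lambda-1|^{q^N}$ do not produce points violating the (rescaled) inequality. The cleanest way around this is to verify that the set $\{\lambda:1-|\lambda|\ge c|\lambda-1|^{\alpha}\}$ is itself convex near $1$, or at least that its intersection with a sufficiently small disc about $1$ lies inside a convex generalised Stolz region with the same parameter. Because the defining inequality involves the convex function $\lambda\mapsto|\lambda-1|^{\alpha}$ for $\alpha\ge1$ and the concave constraint coming from $1-|\lambda|$, a careful but elementary geometric estimate near the contact point $1$ should close this gap, and this is where I would concentrate the technical effort.
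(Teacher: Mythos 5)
Your proposal is correct and follows essentially the same route as the paper's proof: the key estimate $|\lambda-1|\le\|x-Tx\|\lesssim(1-\|Tx\|)^{1/q^N}\le(1-|\lambda|)^{1/q^N}$ for $\lambda=\langle Tx,\phi_x\rangle\in W(T)$ via Lemma~\ref{lem:halperin}, followed by passage to the convex hull. The ``main obstacle'' you identify is resolved exactly as you suggest --- the region $\{\lambda\in\overline{\DD}:|\lambda-1|\le C(1-|\lambda|)^{1/q^N}\}$ is convex since it is the sublevel set of a convex function against a concave one, which is what the paper's brief appeal to concavity and monotonicity of $t\mapsto(1-t)^{1/q^N}$ amounts to.
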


\begin{proof}
Fix an isometry $J\colon X\to X^*$ with the property that $\langle x, \phi_x \rangle = \|x\|^2$ for all $x\in X$. Here and subsequently we let $\phi_x:=J(x)$ for $x\in X$.
Consider an element $\lambda\in W(T)$ of the form $\lambda=\langle Tx, \phi_x \rangle$  with $\|x\| = 1$. Then $|\lambda| \le \|Tx\| \le 1$
and thus $W(T)$ is contained in the closed unit disc. By Lemma~\ref{lem:halperin} we have 
$$|\lambda-1|=|\langle Tx-x,\phi_x\rangle|\le\|x-Tx\|\lesssim (1-\|Tx\|)^{1/q^N} \le (1-|\lambda|)^{1/q^N}.$$
Using  the concavity and monotonicity properties of the functions $t\mapsto (1-t)^{1/q^N}$ and $t\mapsto t^{1/q^N}$ we see that in fact 
$$|\lambda-1|\lesssim (1-|\lambda|)^{1/q^N}$$
for all $\lambda\in W_0(T)$, the closed convex hull of $W(T)$.  It follows that $W_0(T)$ is a generalised Stolz domain with parameter $q^N$.
\end{proof}

\begin{proof}[Proof of Theorem~\ref{thm:uc}]
The result follows at once from Proposition~\ref{prp:domain} and Theorem~\ref{thm:uc_stolz}, noting that $\Fix T=M$ as a consequence of Lemma~\ref{lem:halperin}. The fact that that $P$ is an orthoprojection is clear by Remark~\ref{rem:orth}.
\end{proof}

A notion closely related to uniform convexity, and in some sense dual to it, is that of uniform smoothness. A Banach space $X$ is said to be \emph{uniformly smooth} if for every
$\ep > 0$ there exists $\delta > 0$ such that the inequality 
$$\|x+y\|+\|x-y\|<2+\ep\|y\|$$
holds for any two vectors $x,y\in X$ with $\|x\|= 1$ and $\|y\|\le\delta$.
An equivalent definition is that 
$$ \lim_{t\to0}\frac{\rho_X(t)}{t} = 0,$$
where $\rho_X$ is the \emph{modulus of smoothness} of $X$ defined for $t\ge0$ by 
$$ \rho_X(t) = \sup \left\{\frac{\|x+ty\|+\|x-ty\|}{2}-1 : x,y\in X, \|x\| = 1, \|y\| = 1\right\}.$$
A Banach space $X$ is uniformly smooth if and only if its dual $X^*$ is uniformly convex, and vice versa. In particular, any uniformly smooth space is reflexive and every Hilbert space is uniformly smooth. Indeed, if $X$ is a Hilbert space then $\rho_X(t)=\rho(t)$ for all $t\ge0$, where
$$\rho(t)=(1+t^2)^{1/2}-1\sim \frac{t^2}{2},\quad t\to0.$$
For any Banach space $X$ we have $\rho_X(t) \ge \rho(t)$ for all $t\ge0$.  For $p\in(1,2]$ we say that 
$X$ is $p$-{\em uniformly smooth} if there is a constant $C>0$ such that $\rho_X(t) \le Ct^p$ for all $t\ge 0$. For instance,  any $L^q$-space is $p$-uniformly smooth for $p=\min\{2,q\}$ when $1<q<\infty$. We refer again to \cite[Ch.~1.e]{LiTz} and \cite[Ch.~10]{Marting}  for more information.  The next result is an analogue of Theorem~\ref{thm:uc} for uniformly smooth spaces.

\begin{theorem}\label{thm:smooth}
Let $X$ be a Banach space which is $p$-uniformly smooth for some $p\in(1,2]$ and suppose that $T=P_N\cdots P_1$ for certain orthoprojections $P_1, \dots, P_N\in\B(X)$. Furthermore, let $M= \Ran P_1\cap\dotsc\cap \Ran P_N$. Then $X=M\oplus Z$, where $Z$ denotes the closure of $\Ran(I-T)$. Furthermore,  for all $x\in X$ we have
\begin{equation}\label{eq:convus}
\|T^nx-Px\|\to0,\quad n\to\infty,
\end{equation}
where $P$ denotes the orthoprojection onto $M$ along $Z$. 
Moreover, there is a dichotomy for the rate of convergence. Indeed, if $\Ran(I-T)$ is closed then there exist $C>0$ and $r\in[0,1)$ such that
\begin{equation*}\label{eq:expus}
\|T^n-P\|\le C r^n, \quad n\ge0,
\end{equation*}
whereas if $\Ran(I-T)$ is not closed then the convergence in \eqref{eq:convus} is arbitrarily slow and weakly arbitrarily slow. In either case there exists a dense subspace $X_0$ of $X$ such that for all $x\in X_0$ the convergence in \eqref{eq:convus} is superpolynomially fast.
\end{theorem}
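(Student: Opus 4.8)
The plan is to reduce to the uniformly convex case already treated in Theorem~\ref{thm:uc_stolz} by passing to the dual space. Since $X$ is $p$-uniformly smooth with $p\in(1,2]$, a classical duality result (see \cite[Ch.~1.e]{LiTz} and \cite[Ch.~10]{Marting}) shows that $X^*$ is $q$-uniformly convex, where $q\ge2$ is the conjugate exponent determined by $1/p+1/q=1$. Each adjoint $P_k^*$ is again an orthoprojection on $X^*$, since $(P_k^*)^2=P_k^*$ and $\|P_k^*\|=\|P_k\|\le1$, and $T^*=(P_N\cdots P_1)^*=P_1^*\cdots P_N^*$ is therefore a product of $N$ orthoprojections on the $q$-uniformly convex space $X^*$. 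Applying Theorem~\ref{thm:uc_stolz} to $T^*$ shows that $W_0(T^*)$ is a generalised Stolz domain with parameter $q^N$, and hence, by Proposition~\ref{prp:domain} together with $\|T^*\|=\|T\|\le1$, that $\sigma(T^*)\cap\T\subseteq\{1\}$ and $\|R(e^{i\theta},T^*)\|=O(|\theta|^{-q^N})$ as $\theta\to0$.

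I would then transfer these conclusions to $T$ itself. Since the spectrum is invariant under taking adjoints and $R(\lambda,T)^*=R(\lambda,T^*)$ gives $\|R(\lambda,T)\|=\|R(\lambda,T^*)\|$ for $\lambda\notin\sigma(T)$, we obtain $\sigma(T)\cap\T\subseteq\{1\}$ and the resolvent bound \eqref{eq:res} for $T$ with $\alpha=q^N$. As $X$ is reflexive (being uniformly smooth) and $T$ is power-bounded with $\|T^n\|\le1$, Theorem~\ref{thm:gen} applies directly to $T$ and yields the splitting $X=\Fix T\oplus Z$, the convergence $T^nx\to P_Tx$ for all $x\in X$, the exponential-versus-arbitrarily-slow dichotomy governed by whether $\Ran(I-T)$ is closed, and the dense subspace $X_0$ on which the convergence is superpolynomially fast. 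Moreover $P_T$ is contractive by Remark~\ref{rem:orth}, hence an orthoprojection.

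It remains to identify the limit projection, that is, to show $\Fix T=M$, so that $P_T=P$ is the orthoprojection onto $M$ along $Z$; this I expect to be the main obstacle. The inclusion $M\subseteq\Fix T$ is immediate. For the reverse I would argue by duality: Theorem~\ref{thm:uc} applied to $T^*$ gives $\Fix T^*=M^*:=\bigcap_{k=1}^N\Ran P_k^*$, the crucial point being that the strict convexity of $X^*$ forces a vector fixed by $T^*$ to be fixed by each $P_k^*$. Given a unit vector $v\in\Fix T$, smoothness of $X$ singles out a unique norming functional $\phi_v$, and a short computation along the chain $v,P_1v,\dots,P_N\cdots P_1v=v$, whose members all have norm $1$, should show that $\phi_v=T^*\phi_v$, so that $\phi_v\in\Fix T^*=M^*$ and hence $P_k^*\phi_v=\phi_v$ for every $k$.

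The delicate final step is to pass from \emph{$\phi_v$ fixes each $P_k^*$} back to \emph{$v$ lies in each $\Ran P_k$}, and this is where the geometry must be used with care. From $P_k^*\phi_v=\phi_v$ one gets $\langle P_kv,\phi_v\rangle=\langle v,\phi_v\rangle=1$, so that $\|P_kv\|=1$ and, since $P_k$ fixes $P_kv$, the whole segment $[P_kv,v]$ lies on the unit sphere of $X$; the desired conclusion $P_kv=v$ amounts to this segment being degenerate. Because $X$ is only assumed smooth and need not be strictly convex, this degeneracy is not automatic, and I expect it to be the heart of the argument. I would attempt to close it either by exploiting the interplay between smoothness of $X$ and strict convexity of $X^*$ via the (possibly set-valued) duality map of $X^*$, or by combining the forward peeling in $X$, which shows directly that $v\in\Ran P_N$, with the dual peeling in $X^*$ and an induction on $N$. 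Once $\Fix T=M$ is established, the identification $P_T=P$ and the assertion $X=M\oplus Z$ follow at once, completing the proof.
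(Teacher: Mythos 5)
Your duality reduction is precisely the paper's own argument: $X^{\ast}$ is $q$-uniformly convex with $p^{-1}+q^{-1}=1$, the adjoint $T^{\ast}=P_1^{\ast}\cdots P_N^{\ast}$ is a product of orthoprojections on $X^{\ast}$, Theorem~\ref{thm:uc_stolz} and Proposition~\ref{prp:domain} give \eqref{eq:res} for $T^{\ast}$ with $\alpha=q^N$, the identity $\|R(\lambda,T)\|=\|R(\lambda,T^{\ast})\|$ transfers this to $T$, and Theorem~\ref{thm:gen} then delivers the decomposition $X=\Fix T\oplus Z$, the convergence, the dichotomy and the dense subspace $X_0$. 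Up to that point your proof is complete and correct, and Remark~\ref{rem:orth} gives contractivity of the limit projection.

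The step you leave open --- passing from $\Fix T$ to $M$ --- is, however, a genuine gap, and neither of the routes you propose can close it, because the implication fails without further hypotheses. Your norming-functional chain correctly shows that a unit vector $v\in\Fix T$ has its (unique, by smoothness) functional $\phi_v$ fixed by every $P_k^{\ast}$, whence $\|P_kv\|=1$ and the segment $[v,P_kv]$ lies on the unit sphere; but collapsing that segment requires strict convexity of $X$, which $p$-uniform smoothness does not supply. Concretely, let $X=\RR^2$ carry the ``stadium'' norm whose unit ball is $\mathrm{conv}\bigl(D((-a,0),1)\cup D((a,0),1)\bigr)$ for some $a>0$; this norm is smooth but not strictly convex, and a computation with its dual norm $a|\xi|+(\xi^2+\eta^2)^{1/2}$ shows it is $2$-uniformly smooth. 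The maps $P_1(x,y)=y\,(-a,1)$ and $P_2(x,y)=y\,(a,1)$ are projections of norm one, yet $T=P_2P_1$ acts by $T(x,y)=y\,(a,1)$, so $(a,1)\in\Fix T$ while $\Ran P_1\cap\Ran P_2=\{0\}$; here $T^n\to T$ and the limit is not the projection onto $M$. (The example can be complexified.) So the identification $\Fix T=M$ needs an additional assumption, such as strict convexity of $X$ or the condition $\Ran P_k=\{x:\|P_kx\|=\|x\|\}$ appearing in Theorem~\ref{thm:MAP1}. You should be aware that the paper's proof is itself silent here: it asserts that ``the result now follows as in the case of Theorem~\ref{thm:uc}'', but in that theorem the identification rested on Lemma~\ref{lem:halperin}, which uses uniform convexity of $X$ itself. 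What your argument (and the paper's) does establish is every conclusion of the theorem with $\Fix T$ in place of $M$ and $P_T$ in place of $P$.
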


\begin{proof}
Let $q\in[2,\infty)$ be the H\"older conjugate of $p$, so that $p^{-1}+q^{-1}= 1$. Since $X$ is uniformly smooth, its dual space $X^{\ast}$ is uniformly convex. Let
$$ \gamma_{X^{\ast}}(\ep) = \sup_{t\ge0} \left(\frac{t\ep}{2} - \rho_X(t)\right),\quad 0\le\ep\le2.$$
Then by \cite[Lemma~10.20]{Marting} we have $\delta_{X^*}(\ep)\ge\gamma_{X^*}(\ep)$ for $0\le\ep\le 2$,
and hence
$$\delta_{X^{\ast}}(\ep)\ge \sup_{t\ge0} \left(\frac{t\ep}{2} -Ct^p\right)\ge c\ep^q,\quad 0\le\ep\le2,$$
for $c=4^{-q}C^{1-q}$, as can be seen by considering $t=(\ep/4C)^{q-1}$.
Thus $X^{\ast}$ is $q$-uniformly convex. By considering the dual operator $T^*$ of $T$, which itself is a product of orthoprojections, and observing that $\|R(\lambda,T)\|=\|R(\lambda,T^*)\|$ for all $\lambda\in\CC\setminus\sigma(T)$ we deduce from Theorem~\ref{thm:uc_stolz} and Proposition~\ref{prp:domain} that condition \eqref{eq:res} of Theorem~\ref{thm:gen} is satisfied for $\alpha=q^N$. The result now follows as in the case of Theorem~\ref{thm:uc}. 
\end{proof}

We conclude this section with another application of Theorem~\ref{thm:gen} to products of orthoprojections, this time involving assumptions on the projections rather than the space. Given a Banach space $X$ and a projection $P\in\B(X)$ we shall say, in loose accordance with the terminology of \cite{BaLy10}, that $P$ is a \emph{type-D projection} if $P\ne0$ and there exists $r\in(0,1)$ such that $\|P-rI\|\le 1-r$. In particular, any type-D projection is an orthoprojection. Loosely following \cite{GKS}, a type-D projection $P\in\B(X)$ will be called a \emph{type-U projection} if $\|P-\frac12I\|\le \frac12$. For instance, any orthogonal projection on a Hilbert space is a type-U projection. Note that by Remark~\ref{rem:horo} that for a type-D projection $P$ we have $W_0(P)\subseteq\{\lambda\in\CC:|\lambda-r|\le 1-r\}$ for some $r\in(0,1)$, and hence $W_0(P)$  is a generalised Stolz domain with parameter $\alpha=2$. 

\begin{theorem}\label{thm:MAP1}
Let $X$ be a reflexive Banach space and suppose that  $T\in \B(X)$ is a convex combination of products of certain type-D projections $P_1, \dotsc, P_N$. Then the conclusions of Theorem~\ref{thm:gen} hold and the projection $P_T$ onto $\Fix T$ along the closure of $\Ran (I-T)$ is an orthoprojection. Furthermore, if 
$\Ran P_k=\{x\in X:\|P_kx\|=\|x\|\}$ for $1\le k\le N$ and if all of the $N$ projections $P_1, \dotsc, P_N$ actually appear in the decomposition of $T$, then $\Fix T=\Ran P_1\cap\dotsc\cap\Ran P_N$.
\end{theorem}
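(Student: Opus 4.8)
The plan is to deduce the conclusions of Theorem~\ref{thm:gen} by verifying its hypotheses, and then to identify $\Fix T$ explicitly under the additional assumptions. First I would check that $T$ is power-bounded with $\sigma(T)\cap\T\subseteq\{1\}$ and that the resolvent bound \eqref{eq:res} holds. For this I would argue that the numerical range $W_0(T)$ of the convex combination $T=\sum_j t_j Q_j$ (where each $Q_j$ is a product of the type-D projections and $t_j\ge0$ with $\sum_j t_j=1$) is itself a generalised Stolz domain with parameter $\alpha=2$. By the observation just before the statement, each type-D projection $P_k$ has $W_0(P_k)$ contained in a horocycle $\{|\lambda-r_k|\le 1-r_k\}$, hence is a generalised Stolz domain with parameter $2$; I would then need to pass this property along products (as in Theorem~\ref{thm:uc_stolz}) and then along convex combinations. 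Since convexity of $W_0$ and the defining inequality \eqref{eq:stolz} behave well under averaging inside the disc, one obtains that $W_0(T)$ is again a generalised Stolz domain with parameter $2$. Applying Proposition~\ref{prp:domain} then gives \eqref{eq:res}, and Theorem~\ref{thm:gen} applies, yielding the dichotomy and the decomposition $X=\Fix T\oplus Z$. The claim that $P_T$ is an orthoprojection follows from Remark~\ref{rem:orth} once one knows $T$ is a contraction, which holds because $\|P_k\|\le 1$ for each type-D projection, so $\|T\|\le\sum_j t_j\|Q_j\|\le 1$.

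The substantive part is the final assertion, identifying $\Fix T$ with $\Ran P_1\cap\dotsc\cap\Ran P_N$ under the hypothesis $\Ran P_k=\{x:\|P_kx\|=\|x\|\}$. The inclusion $\Ran P_1\cap\dotsc\cap\Ran P_N\subseteq\Fix T$ is immediate: if $x$ lies in every range then $P_k x=x$ for all $k$, so every product $Q_j$ fixes $x$ and hence so does the convex combination $T$. The reverse inclusion is where the work lies, and I expect this to be the main obstacle. The plan is to take $x\in\Fix T$ with $\|x\|=1$ and show $\|P_k x\|=\|x\|$ for every $k$ that appears, which by hypothesis forces $x\in\Ran P_k$. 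The key geometric input is strict contractivity: for a type-D projection $P$ one has $\|Pz\|\le\|z\|$ with equality only when $z\in\Ran P$. I would combine the fixed-point relation $x=Tx=\sum_j t_j Q_j x$ with $\|Q_j x\|\le\|x\|$ and the fact that equality in the triangle inequality in a strictly convex-type situation forces all the $Q_j x$ to coincide and to equal $x$; here the horocycle condition on $W_0(P_k)$ is doing the job of strict convexity.

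Concretely, from $x=\sum_j t_j Q_j x$ and $\|Q_j x\|\le 1=\|x\|$ one gets $1=\|x\|\le\sum_j t_j\|Q_j x\|\le 1$, so $\|Q_j x\|=1$ for each $j$ with $t_j>0$. Tracing $\|Q_j x\|=1$ back through the product $Q_j=P_{k_m}\cdots P_{k_1}$ and using that each $P_{k_i}$ strictly decreases norm off its range (a consequence of $\|P-rI\|\le 1-r$, which gives a quantitative bound of the form $\|x-Px\|\lesssim(1-\|Px\|)^{1/2}$ analogous to Lemma~\ref{lem:halperin}), one concludes step by step that $\|P_{k_1}x\|=\|x\|$, whence $x\in\Ran P_{k_1}$ and $P_{k_1}x=x$, and then inductively $x\in\Ran P_{k_i}$ for every index appearing in $Q_j$. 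Since every $P_k$ is assumed to appear in some product contributing to $T$, this shows $x\in\Ran P_k$ for all $k$, giving $\Fix T\subseteq\Ran P_1\cap\dotsc\cap\Ran P_N$ and completing the identification. The delicate point to get right is the quantitative strict-contractivity estimate for type-D projections and its propagation through a product, which is the analogue in this setting of the inductive Halperin-type argument in Lemma~\ref{lem:halperin}.
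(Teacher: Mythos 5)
There is a genuine gap in the first half of your argument, at the point where you pass the generalised Stolz domain property ``along products''. The numerical range is not submultiplicative: knowing that each $W_0(P_k)$ lies in a horocycle gives essentially no control over $W_0(P_{k_m}\cdots P_{k_1})$, and the analogy you invoke with Theorem~\ref{thm:uc_stolz} is not available here because that argument rests on $q$-uniform convexity of $X$ (via Lemma~\ref{lem:halperin}), whereas in Theorem~\ref{thm:MAP1} the space is only assumed reflexive. The step that actually makes the product tractable is carried out at the level of the operator norm, not the numerical range: by \cite[Lemma~3.5]{BaLy10} the set of contractions $Q$ satisfying $\|Q-rI\|\le 1-r$ for some $r\in(0,1)$ is a \emph{convex multiplicative semigroup}, so the type-D condition itself is inherited by every product of the $P_k$ and by every convex combination of such products; hence $\|T-rI\|\le 1-r$ for some $r\in(0,1)$, and only then does Remark~\ref{rem:horo} place $W_0(T)$ inside a horocycle, after which Proposition~\ref{prp:domain} and Theorem~\ref{thm:gen} apply exactly as you describe. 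Without this lemma (or a proof of it) your verification of the hypothesis \eqref{eq:res} does not go through. Your treatment of convex combinations and of the contraction/orthoprojection claims is fine.

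Your argument for the identification $\Fix T=\Ran P_1\cap\dotsc\cap\Ran P_N$ is essentially correct, and in fact more self-contained than the paper, which simply refers this part to \cite{BaLy10}. You can also streamline it: no quantitative strict-contractivity estimate of the form $\|x-Px\|\lesssim(1-\|Px\|)^{1/2}$ is needed. From $x=\sum_j t_jQ_jx$ with $\|x\|=1$ you correctly get $\|Q_jx\|=1$ for every $j$ with $t_j>0$; then the chain $1=\|P_{k_m}\cdots P_{k_1}x\|\le\cdots\le\|P_{k_1}x\|\le 1$ forces $\|P_{k_1}x\|=\|x\|$, the hypothesis $\Ran P_{k_1}=\{x:\|P_{k_1}x\|=\|x\|\}$ gives $P_{k_1}x=x$, and one proceeds inductively along the factors. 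Only contractivity of the $P_k$ and the stated hypothesis on their ranges are used, so the ``delicate point'' you flag at the end is not actually needed.
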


\begin{proof}
Note first that $T$, being a convex combination of products of orthoprojections, is a contraction. It was proved in \cite[Lemma~3.5]{BaLy10} that the set of contractions $Q\in\B(X)$ for which there exists $r\in (0,1)$ such that $\|Q-rI\| \le 1-r$ is a convex multiplicative semigroup. It follows in particular that $\|T-rI\|\le 1-r$ for some $r\in(0,1)$. Hence the numerical range $W_0(T)$ of $T$ is contained a generalised Stolz domain. The first part of the result now follows form Theorem~\ref{thm:gen}, and the rest is proved in  \cite{BaLy10}.
\end{proof}

\begin{remark}
If $X$ is a Hilbert space and $T=P_N\cdots P_1$, where $P_k$ is the orthogonal projection onto the closed subspace $M_k$ of $X$, $1\le k\le N$, then  $\Ran(I-T)$ is closed if and only if $M_1^{\perp}+\cdots+M_N^{\perp}$ is closed, and moreover $\Fix T=M$, where $M = M_1 \cap \dotsc \cap M_N$. By contractivity of $T$ we deduce that $P_T$ coincides with the orthogonal projection $P_M$ onto $M$. It was shown in \cite[Theorem~4.3]{BaSe16} that it is possible to obtain explicit values for the numbers $C$ and $r$ appearing in \eqref{eq:exp} in terms of the Friedrichs number of the subspaces $M_1 ,\dots,M_N $. In fact, \cite[Theorem~4.3]{BaSe16} goes beyond the present theorem in other ways too by exploiting the theory of (unconditional) Ritt operators. Indeed, it is known that in the Hilbert space case the numerical range $W(T)$ of $T$ is contained in a Stolz domain, which by means of \eqref{eq:res_bd} leads to \eqref{eq:res} with $\alpha=1$ rather than $\alpha=2$. For a closer analysis of the asymptotic behaviour in the method of alternating projections see for instance \cite{BGM11,BaSe16, PuReZa13}. 
\end{remark}

\section{The Douglas-Rachford splitting method}\label{sec:appl2}

Let $X$ be a Hilbert space and suppose that $M_1,M_2$ are closed subspaces of $X$. If for $k=1,2$ we let $P_k$ denote the orthogonal projection onto $M_k$ then we may consider the operator $T\in\B(X)$ given by 
$$T=P_2P_1+(I-P_2)(I-P_1).$$
 If we let $Q_k=2P_k-I$, $k=1,2$, then we may write $T$ as $T=\frac12(I+Q_2Q_1)$. The operator $T$ is known as the \emph{Douglas-Rachford operator} and plays an important role in the Douglas-Rachford splitting method. Here one is usually interested in the asymptotic behaviour of sequences of the form $(P_1T^nx)_{n\ge0}$ for different initial vectors $x\in X$, but in order to understand such sequences one needs first to understand the sequences $(T^nx)_{n\ge0}$ with $x\in X$; see \cite{BDNPW14, DouRac56} for further details. A generalisation to several subspaces (the \emph{cyclic} Douglas-Rachford iteration scheme) has been proposed recently in \cite{BoTa14,BoTa15}. The following result is a Banach space version of the Douglas-Rachford splitting method with several reflection operators arising from type-U projections.

\begin{theorem}\label{thm:DRban}
Let $X$ be a reflexive Banach space and let $P_1,\dotsc,P_N\in\B(X)$ be  type-U projections on $X$.
For $1\le k,\ell\le N$ let  
$$T_{k\ell}=P_kP_\ell+(I-P_k)(I-P_\ell)$$ and suppose that $T\in\B(X)$ is a convex combination of products of the operators $T_{k\ell}$,   $1\le k,\ell\le N$. Then the conclusions of Theorem~\ref{thm:gen} hold and the projection $P_T$ onto $\Fix T$ along the closure of $\Ran (I-T)$ is an orthoprojection.
\end{theorem}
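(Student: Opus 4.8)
The plan is to reduce Theorem~\ref{thm:DRban} to an application of Theorem~\ref{thm:gen} by showing that the Douglas-Rachford operator $T$ is a contraction whose numerical range $W_0(T)$ lies in a generalised Stolz domain. The strategy mirrors the proof of Theorem~\ref{thm:MAP1}: once we know that $\|T-rI\|\le 1-r$ for some $r\in(0,1)$, Remark~\ref{rem:horo} places $W_0(T)$ inside the horocycle $\{\lambda:|\lambda-r|\le 1-r\}$, which is a generalised Stolz domain with parameter $\alpha=2$, and then Proposition~\ref{prp:domain} together with Theorem~\ref{thm:gen} yields all the stated conclusions. The assertion that $P_T$ is an orthoprojection follows from Remark~\ref{rem:orth}, since a contraction $T$ forces $\|P_T\|\le 1$.

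First I would examine the building blocks $T_{k\ell}=P_kP_\ell+(I-P_k)(I-P_\ell)$. Since each $P_k$ is a type-U projection, we have $\|P_k-\tfrac12 I\|\le\tfrac12$, so writing $Q_k=2P_k-I$ gives $\|Q_k\|\le1$; that is, the reflection operators $Q_k$ are contractions. A direct computation shows
\begin{equation*}
T_{k\ell}=\tfrac12\bigl(I+Q_kQ_\ell\bigr),
\end{equation*}
exactly as in the two-subspace Hilbert space case recalled before the theorem. Consequently $\|T_{k\ell}-\tfrac12 I\|=\tfrac12\|Q_kQ_\ell\|\le\tfrac12$, so each $T_{k\ell}$ is itself a type-D projection-like contraction satisfying the horocycle estimate with $r=\tfrac12$. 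In particular every $T_{k\ell}$ is a contraction lying in the semigroup of operators $Q$ for which $\|Q-rI\|\le 1-r$ holds for some $r\in(0,1)$.

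The key step is then to invoke the semigroup result from \cite[Lemma~3.5]{BaLy10}, which states that the set of contractions $Q\in\B(X)$ admitting some $r\in(0,1)$ with $\|Q-rI\|\le 1-r$ is closed under both multiplication and convex combinations. Since $T$ is, by hypothesis, a convex combination of products of the operators $T_{k\ell}$, and since each factor lies in this convex multiplicative semigroup, it follows that $T$ itself satisfies $\|T-rI\|\le 1-r$ for some $r\in(0,1)$. This is precisely the condition needed to apply Remark~\ref{rem:horo}, and the remainder of the argument is automatic. \textbf{The main obstacle} I anticipate is the algebraic verification that $T_{k\ell}=\tfrac12(I+Q_kQ_\ell)$ and hence lies in the Badea--Lyubich semigroup: one must check that the identity $Q_k^2=I$ used in the Hilbert space derivation carries over, and here it does only because type-U projections satisfy $P_k^2=P_k$ so that $Q_k^2=(2P_k-I)^2=4P_k-4P_k+I=I$. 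Confirming that the $Q_k$ are genuine contractions in the Banach space setting (rather than mere involutions) is exactly what the type-U hypothesis $\|P_k-\tfrac12 I\|\le\tfrac12$ buys us, and this is the one place where the geometric assumption on the projections is indispensable.
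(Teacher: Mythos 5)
Your proposal is correct and follows essentially the same route as the paper: write $Q_k=2P_k-I$, note $\|Q_k\|\le1$ from the type-U hypothesis, deduce $\|T_{k\ell}-\tfrac12 I\|\le\tfrac12$, invoke \cite[Lemma~3.5]{BaLy10} to propagate the horocycle bound to convex combinations of products, and finish via Remark~\ref{rem:horo}, Proposition~\ref{prp:domain}, Theorem~\ref{thm:gen} and Remark~\ref{rem:orth}. One small correction: the identity $T_{k\ell}=\tfrac12(I+Q_kQ_\ell)$ is a purely algebraic expansion that does not require $Q_k^2=I$ or idempotence of the $P_k$, so the ``main obstacle'' you flag is not actually an obstacle.
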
 
\begin{proof}
If we let $Q_k=2P_k-I$, $1\le k\le N$, then we may write $T_{k\ell}$ as $T_{k\ell}=\frac12(I+Q_kQ_\ell)$ for  $1\le k,\ell\le N$. Now 
$$\|Q_k\| = 2\Big\|P_k-\frac{1}{2}I\Big\| \le  1$$
for $1\le k\le N$ and hence 
$$ \Big\|T_{k\ell} -\frac{1}{2}I \Big\| = \frac{\|Q_kQ_\ell\|}{2} \le \frac12$$
for $1\le k,\ell\le N$. Using \cite[Lemma~3.5]{BaLy10} as in the proof of Theorem~\ref{thm:MAP1} the result now follows from Theorem~\ref{thm:gen} and Remark~\ref{rem:orth}.
\end{proof}

We conclude with the following new result on the Douglas-Rachford splitting method for the classical case of two subspaces of a Hilbert space.

\begin{theorem}\label{thm:DR}
Let $X$ be a Hilbert space and let 
$$T=P_2P_1+(I-P_2)(I-P_1)$$
 be the Douglas-Rachford operator corresponding to the orthogonal projections $P_1,P_2$ onto two closed subspaces $M_1,M_2$ of $X$.  Then for all $x\in X$ we have
\begin{equation}\label{eq:DRconv}
\|T^nx-Px\|\to0,\quad n\to\infty,
\end{equation}
where $P$ denotes the orthogonal projection onto $(M_1\cap M_1)\oplus (M_1^\perp\cap M_2^\perp)$. 
Moreover, there is a dichotomy for the rate of convergence. Indeed, if $M_1+M_2$ is closed then there exist $C>0$ and $r\in[0,1)$ such that
\begin{equation}\label{eq:DRexp}
\|T^n-P\|\le C r^n, \quad n\ge0,
\end{equation}
whereas if $M_1+M_2$ is not closed then the convergence in \eqref{eq:DRconv} is arbitrarily slow and weakly arbitrarily slow. In either case there exists a dense subspace $X_0$ of $X$ such that for all $x\in X_0$ the convergence in \eqref{eq:DRconv} is superpolynomially fast.
\end{theorem}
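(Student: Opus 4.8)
The plan is to obtain the result as a special case of the general machinery by showing that the Douglas--Rachford operator $T=P_2P_1+(I-P_2)(I-P_1)$ satisfies the hypotheses of Theorem~\ref{thm:gen} with a suitable resolvent growth exponent, and then to identify $\Fix T$ and the range condition explicitly. First I would write $T=\frac12(I+Q_2Q_1)$ with $Q_k=2P_k-I$. Since each $Q_k$ is a self-adjoint unitary (a reflection), the product $Q_2Q_1$ is unitary, so $T=\frac12(I+U)$ for a unitary $U$. In particular $\|T-\frac12I\|=\frac12$, which by Remark~\ref{rem:horo} places $W_0(T)$ inside the horocycle $\{\lambda:|\lambda-\frac12|\le\frac12\}$; this is a generalised Stolz domain with parameter $\alpha=2$, and $T$ is a contraction. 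Thus Proposition~\ref{prp:domain} gives $\sigma(T)\cap\T\subseteq\{1\}$ together with the resolvent bound \eqref{eq:res}, and since every Hilbert space is reflexive, Theorem~\ref{thm:gen} applies and yields \eqref{eq:conv}, the dichotomy \eqref{eq:exp}, and the dense subspace $X_0$ on which convergence is superpolynomially fast. Because $T$ is a contraction, Remark~\ref{rem:orth} shows the limiting projection $P_T$ is an orthoprojection.

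The remaining work is to identify the fixed-point space, the limit projection, and the range closedness condition in the concrete geometric terms stated in the theorem. For $\Fix T$, I would compute directly: $Tx=x$ means $\frac12(x+Q_2Q_1x)=x$, i.e. $Q_2Q_1x=x$, i.e. $Q_1x=Q_2x$ (using $Q_2^2=I$ and self-adjointness of $Q_2$). Writing $Q_1x=Q_2x$ in terms of the projections, $2P_1x-x=2P_2x-x$, so $P_1x=P_2x$; a short argument splitting into the $\pm1$ eigenspaces of the reflections then shows this forces $x\in(M_1\cap M_2)\oplus(M_1^\perp\cap M_2^\perp)$, and conversely every such $x$ is fixed. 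Since $P_T$ is the orthoprojection onto $\Fix T$ along $Z=\overline{\Ran(I-T)}=(\Fix T)^\perp$, it coincides with the orthogonal projection $P$ onto $(M_1\cap M_2)\oplus(M_1^\perp\cap M_2^\perp)$ claimed in \eqref{eq:DRconv}. (I note the typo $M_1\cap M_1$ in the statement, which should read $M_1\cap M_2$.)

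The last ingredient, and the step I expect to be the main obstacle, is translating the abstract range-closedness condition ``$\Ran(I-T)$ is closed'' into the geometric condition ``$M_1+M_2$ is closed.'' A direct computation gives $I-T=\frac12(I-Q_2Q_1)$, so $\Ran(I-T)$ is closed if and only if $\Ran(I-Q_2Q_1)$ is closed. The natural route is to relate the reflection product $Q_2Q_1$ to the alternating-projection operator $P_2P_1$ via the known correspondence between their spectral data; the angle between $M_1$ and $M_2$ controls both, and classically $M_1+M_2$ is closed precisely when the subspaces are at a positive Friedrichs angle, which in turn is equivalent to $1$ being an isolated point of (or absent from) the relevant spectrum. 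I would make this precise using the two-projection / two-subspace decomposition of $X$ into the five pieces $M_1\cap M_2$, $M_1\cap M_2^\perp$, $M_1^\perp\cap M_2$, $M_1^\perp\cap M_2^\perp$, and a generic part on which $P_1,P_2$ act as a pair of $2\times2$ matrices parametrised by the angle operator. On the generic part one checks explicitly that closedness of $\Ran(I-Q_2Q_1)$ is equivalent to the angle operator being bounded away from $0$, which is exactly the condition that $M_1+M_2$ be closed. Assembling these pieces then completes the identification and, with the general dichotomy already in hand from Theorem~\ref{thm:gen}, finishes the proof.
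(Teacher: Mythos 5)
Your proposal is correct and its core reduction is the same as the paper's: both routes hinge on writing $T=\tfrac12(I+Q_2Q_1)$ with $\|Q_2Q_1\|\le1$, so that $\|T-\tfrac12 I\|\le\tfrac12$ places $W_0(T)$ in a horocycle (a generalised Stolz domain with parameter $2$), after which Proposition~\ref{prp:domain} and Theorem~\ref{thm:gen} deliver the convergence, the dichotomy and the dense subspace $X_0$; the paper packages this step as Theorem~\ref{thm:DRban}, which you have essentially inlined. Where you diverge is in the two concrete identifications, both of which the paper simply outsources to Bauschke et al.\ \cite{BDNPW14}. For $\Fix T$ the paper cites \cite[Proposition~3.6]{BDNPW14}, whereas you compute it directly; your reduction to $P_1x=P_2x$ is right, and in fact you do not need the eigenspace splitting you allude to --- if $P_1x=P_2x=:y$ then $y\in M_1\cap M_2$ and $x-y\in M_1^\perp\cap M_2^\perp$, which settles it in two lines. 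For the range condition the paper does \emph{not} prove directly that $\Ran(I-T)$ is closed if and only if $M_1+M_2$ is closed; instead it invokes the known equivalence (from \cite[Fact~2.3 and Theorem~4.1]{BDNPW14}) between closedness of $M_1+M_2$ and the exponential estimate \eqref{eq:DRexp}, and lets the dichotomy of Theorem~\ref{thm:gen} do the rest, since exponential convergence and arbitrarily slow convergence are mutually exclusive. Your alternative --- an explicit two-subspace (Halmos) decomposition with the angle operator controlling both closedness of $M_1+M_2$ and closedness of $\Ran(I-Q_2Q_1)$ on the generic part --- is the standard self-contained argument and would work, but it is the one step you have only sketched rather than carried out; if you want to avoid that computation, the paper's indirect route via the known rate result is the shortcut. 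Your observation that $M_1\cap M_1$ in the statement should read $M_1\cap M_2$ is also correct.
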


\begin{proof}
It is shown in \cite[Proposition~3.6]{BDNPW14} that  
$$\Fix T=(M_1\cap M_1)\oplus (M_1^\perp\cap M_2^\perp).$$
The result now follows from Theorem~\ref{thm:DRban} and the observation that \eqref{eq:DRexp} holds for some $C>0$ and $r\in[0,1)$ if and only if $M_1+M_2$ is closed; see \cite[Fact~2.3 and Theorem~4.1]{BDNPW14}. 
\end{proof}

\begin{remark}\begin{enumerate}[(a)]
\item Note that by \cite[Theorem~4.1]{BDNPW14} we may choose $C=1$ and $r=c(M_1,M_2)$ in \eqref{eq:DRexp}, where $c(M_1,M_2)$ is the Friedrichs number of the subspaces $M_1, M_2$.
\item We would like to mention that the Hilbert space results obtained in Sections~\ref{sec:appl1} and \ref{sec:appl2} easily extend to closed affine subspaces. As a final remark, we note that the methods of this paper can be applied to other projection algorithms and that, on certain classes of problems, various iterative projection methods coincide with each other. For example, if the sets are closed affine subspaces, then the method of alternating projections coincides with Dykstra's method \cite{BoDy86}. Applied to the phase retrieval problem, the method of alternating projections coincides with the error reduction method and the Douglas-Rachford method coincides with Fienup's hybrid input-output algorithm \cite{BCL}. For other such coincidences (in the cases of hyperplanes and half-spaces), see \cite{BoTa14}.
\end{enumerate}
\end{remark}


\begin{thebibliography}{99}



\bibitem{BGM11}
C.~Badea, S.~Grivaux, and V.~M\"uller, {\it The rate of convergence in the method of alternating projections},  Algebra i Analiz (St. Petersburg Math. J.), {\bf 23} (2011), 1--30.

\bibitem{BaLy10}
C.~Badea and Yu.I. Lyubich, {\it  Geometric, spectral and asymptotic properties of averaged products of projections in Banach spaces},  Studia Math. {\bf 201} (2010), 21--35. 

\bibitem{BaSe16}
C.~Badea and D.~Seifert, {\it  Ritt operators and convergence in the method of alternating
  projections}, J. Approx. Theory {\bf 205} (2016), 133--148.

  \bibitem{BHW}
J.~Bana{\'s}, A.~Hajnosz, and S.~W{e}drychowicz, {\it On
  convexity and smoothness of Banach space}, 
 Comment. Math. Univ. Carolin.
{\bf  31} (1990), 445--452.


\bibitem{BDNPW14}
H.H. Bauschke, J.Y.~Bello Cruz, T.T.A. Nghia, H.M. Phan, and X.~Wang, {\it  The rate of linear convergence of the Douglas--Rachford algorithm for
  subspaces is the cosine of the Friedrichs angle}, 
 J. Approx. Theory {\bf 185} (2014), 63--79.

\bibitem{BDNPW16}
H.H. Bauschke, J.Y.~Bello Cruz, T.T.A. Nghia, H.M. Phan, and X.~Wang, {\it Optimal rates of linear convergence of relaxed alternating projections 
and generalized Douglas-Rachford methods for two subspaces},
 Numer. Algor. {\bf73} (2016), 33--76.

\bibitem{BCL}
H. Bauschke, P.L. Combettes and D.R. Luke, {\it Phase retrieval, error reduction algorithm, and Fienup variants: a view from convex optimization},  
 J. Opt. Soc. Amer. A {\bf19} (2002), 1334--1345.


\bibitem{BoDuNR} 
F.F. Bonsall and J. Duncan, {\it  Numerical ranges of operators on normed spaces and of elements of normed algebras} and {\em Numerical ranges II},  London Mathematical Society Lecture Note Series, No. 2,  Cambridge University Press, 1971. and London Mathematical Society Lecture Notes Series, No. 10., Cambridge University Press, 1973.

\bibitem{BoTa14}
J.M. Borwein and M.K. Tam, {\it   A cyclic Douglas-Rachford iteration scheme}, 
J. Optim. Theory Appl. {\bf160} (2014), 1--29.

\bibitem{BoTa15}
J.M. Borwein and M.K. Tam, {\it   The cyclic Douglas-Rachford method for inconsistent feasibility problems}, J. Nonlinear Convex Anal. {\bf16} (2015), 573--584.

\bibitem{BoDy86} 
J.P. Boyle and R. Dykstra, {\it  A method for finding projections onto the intersection of convex sets in Hilbert spaces}. In {\it Advances in order restricted statistical inference (Iowa City, Iowa, 1985)}, pages 28--47, Lect. Notes Stat., No 37, Springer, Berlin, 1986. 

\bibitem{BruRei77}
R.E. Bruck and S. Reich, {\it  Nonexpansive projections and resolvents of accretive operators in Banach spaces}, Houston J. Math. {\bf 3} (1977), 459--470.

\bibitem{CaZa01}
V.~Cachia and V.A. Zagrebnov, {\it   Operator-norm approximation of semigroups by quasi-sectorial
  contractions}, J. Funct. Anal. {\bf180} (2001), 176--194.

 \bibitem{CoLi16} 
 G. Cohen and M. Lin, {\it  Remarks on rates of convergence of powers of contractions}, J. Math. Anal. Appl. {\bf436} (2016), 1196--1213.


\bibitem{CrPa}
M. Crouzeix and C. Palencia, {\it  The numerical range as a spectral set}, Preprint, arXiv:1702.00668, 2017.

\bibitem{De92}
F.~Deutsch, {\it The method of alternating orthogonal projections}. In {\it Approximation theory, spline functions and applications
  ({M}aratea, 1991)}, vol. 356 of {\em NATO Adv. Sci. Inst. Ser. C Math.
  Phys. Sci.}, pages 105--121. Kluwer Acad. Publ., Dordrecht, 1992.

\bibitem{De01}
F.~Deutsch, {\em Best approximation in inner product spaces}, CMS Books in Mathematics, Springer, New York, 2001.

\bibitem{DeHuSurvey1}
F.~Deutsch and H.~Hundal, {\it Arbitrarily slow convergence of sequences of linear operators: a
  survey}. In {\em Fixed-point algorithms for inverse problems in science and
  engineering}, vol.~49 of {\em Springer Optim. Appl.}, pages 213--242, Springer, New York, 2011.

\bibitem{DouRac56}
J.~Douglas and H.H. Rachford, {\it On the numerical solution of heat conduction problems in two and
  three space variables}, Trans. Amer. Math. Soc. {\bf 82} (1956), 421--439.

 \bibitem{Dun07}
N.~Dungey, {\it A class of contractions in Hilbert space and applications}, Bull. Pol. Acad. Sci. Math. {\bf55} (2007), 347--355.

\bibitem{Dye89}
J. Dye, {\it Convergence of random products of compact contractions in Hilbert space}, Integral Equations Operator Theory {\bf 12} (1989), 12--22. 


\bibitem{Est84}
J.~Esterle, {\it Mittag-{L}effler methods in the theory of {B}anach algebras and a new
  approach to {M}ichael's problem}. In {\em Proceedings of the conference on {B}anach algebras and
  several complex variables ({N}ew {H}aven, {C}onn., 1983)}, vol.~32 of {\em
  Contemp. Math.}, pages 107--129. Amer. Math. Soc., Providence, RI, 1984.
  
  \bibitem{GKS}
G.~Godefroy, N.~J. Kalton, and P.~D. Saphar, {\it Unconditional ideals in
  {B}anach spaces}, Studia Math. {\bf 104} (1993), 13--59.

\bibitem{Hal62}
I.~Halperin, {\it The product of projection operators}, Acta Sci. Math. (Szeged) {\bf 23} (1962), 96--99.

\bibitem{KT86}
Y.~Katznelson and L.~Tzafriri, {\it  On power bounded operators}, J. Funct. Anal. {\bf 68} (1986), 313--328.

\bibitem{Kre85}
U.~Krengel, {\it Ergodic Theorems},
Walter de Gruyter, Berlin, 1985.

\bibitem{LiTz}
 J.~Lindenstrauss and L.~Tzafriri, {\it Classical Banach spaces. {II}},
Ergebnisse der Mathematik und ihrer Grenzgebiete,
  vol. 97., 
  Springer-Verlag, Berlin, 1977.
 
\bibitem{Ly99}
Yu.I. Lyubich, {\it Spectral localization, power boundedness and invariant subspaces
  under {R}itt's type condition}, Studia Math. {\bf134} (1999), 153--167.

\bibitem{Mue88}
V.~M\"uller, {\it Local spectral radius formula for operators in Banach spaces}, Czechoslovak Math. J.  {\bf38} (1988), 726--729.

\bibitem{Mue05}
V.~M\"uller, {\it Power bounded operators and supercyclic vectors. II}, Proc. Amer. Math. Soc. {\bf 133} (2005), 2997--3004. 

\bibitem{NaZe99}
B.~Sz.-Nagy and J.~Zem\'anek, {\it A resolvent condition implying power boundedness}, Studia Math.  {\bf 134} (1999), 143--151.

\bibitem{NaFo}
B. Sz.-Nagy, C. Foias, H. Bercovici and L. K\'erchy, {\it Harmonic analysis of operators on Hilbert space}, 
second edition (revised and enlarged), Universitext, Springer, New York, 2010.

\bibitem{Nor60}
G.Nordlander, {\it The modulus of convexity in normed linear spaces}, 
 Ark. Mat. {\bf 4} (1960), 15--17.
 
 
\bibitem{Pau12}
V. Paulauskas, {\it A generalization of sectorial and quasi-sectorial operators}, 
 J. Funct. Anal. {\bf 262} (2912), 2074--2099.

  \bibitem{Marting}
G. Pisier, {\it Martingales in Banach spaces},
Cambridge Studies in Adv. Math. 155, Cambridge University Press, 2016. 
                   
                  
\bibitem{Pog}
A.V. Pogorelov,{\it Differential geometry}, 
translated from the first Russian ed. by L. F. Boron, P. Noordhoff N. V., Groningen, 1959.

\bibitem{PuReZa13}
E. Pustylnik, S. Reich and A.J. Zaslavski,{\it Inner inclination of subspaces and infinite products of orthogonal projections},  J. Nonlinear Convex Anal. {\bf 14} (2013), 423--436.

\bibitem{Sei15b}
D.~Seifert,{\it A quantified Tauberian theorem for sequences}, Studia Math. {\bf 227} (2015), 183--192.

\bibitem{Sei16}
D.~Seifert,{\it Rates of decay in the classical Katznelson-Tzafriri theorem},
 J. Anal. Math. {\bf130} (2016), 329--354.

\bibitem{SpSt96}
M.N. Spijker and F.A.J. Straetemans,{\it Stability estimates for families of matrices of nonuniformly bounded order}, Linear Algebra Appl. {\bf 239} (1996), 77--102.

\bibitem{SpSt97}
M.N. Spijker and F.A.J. Straetemans,{\it Error growth analysis via stability regions for discretizations of initial value problems},  BIT {\bf 37} (1997), 442--464. 

\bibitem{SpSt98}
M.N. Spijker and F.A.J. Straetemans, {\it A note on the order of contact between sets in the complex plane}, J. Math. Anal. Appl. {\bf217} (1998), 707--723.

\bibitem{Tohoku}
J.G. Stampfli and J.P. Williams, {\it Growth conditions and the numerical range in a Banach algebra},
T\^ohoku Math. J. (2) {\bf20} (1968), 417--424. 


\end{thebibliography}
\end{document}